\let\ORIlabel\label
\let\ORIrefstepcounter\refstepcounter
\AddToHook{package/hyperref/before}{\let\label\ORIlabel\let\refstepcounter\ORIrefstepcounter}

\PassOptionsToPackage{usenames,dvipsnames}{xcolor}
\documentclass[onefignum,onetabnum,twoside]{siamart220329}

\usepackage[T1]{fontenc}
\usepackage[utf8]{inputenc}
\usepackage[american]{babel}

\usepackage[sort]{cite}

\usepackage{algorithm}
\usepackage{algpseudocode}
\usepackage{amsmath}
\usepackage{amssymb}
\usepackage{booktabs}
\usepackage{hyperref}
\usepackage[shortlabels]{enumitem}
\usepackage{subfig}

\usepackage{bbm}
\usepackage{mathbbol}

\newsiamremark{remark}{Remark}
\newsiamremark{example}{Example}
\newsiamremark{assumption}{Assumption}

\usepackage{siunitx}
\sisetup{
  scientific-notation = true,
  round-mode = figures,
  round-precision = 4,
  round-pad= true
}


\usepackage{macros}

\usepackage[normalem]{ulem}
\usepackage{cancel}


\newcommand{\ie}{i.\,e.}


\headers%
  {Hyperbolic conservation laws}%
  {Kronbichler, Maier, Tomas}

\title{%
  Graph-based methods for hyperbolic systems of conservation laws using
  discontinuous space discretizations.}

\author{%
  Martin Kronbichler%
  \thanks{University of Augsburg, Germany and Ruhr University Bochum,
    Universitätsstraße 150, 44801 Bochum, Germany
    (\email{martin.kronbichler@rub.de})}
  \and%
  Matthias Maier%
  \thanks{Department of Mathematics, Texas A\&M University, 3368 TAMU,
    College Station, TX 77843, USA (\email{maier@tamu.edu})}
  \and%
  Ignacio Tomas%
  \thanks{Department of Mathematics and Statistics, Texas Tech University,
    2500 Broadway, Lubbock, TX 79409, USA (\email{igtomas@ttu.edu})
  }}

\begin{document}

\maketitle

\begin{abstract}
  We present a graph-based numerical method for solving hyperbolic systems
  of conservation laws using discontinuous finite elements. This work fills
  important gaps in the theory as well as practice of graph-based schemes.
  In particular, four building blocks required for the implementation  of
  flux-limited graph-based methods are developed and tested: a first-order
  method with mathematical guarantees of robustness; a high-order method based
  on the entropy viscosity technique; a procedure to compute local bounds; and
  a convex limiting scheme. Two important features of the current work are the
  fact that (i) boundary conditions are incorporated into the mathematical
  theory as well as the implementation of the scheme. For instance, the
  first-order version of the scheme satisfies pointwise entropy inequalities
  including boundary effects for any boundary data that is admissible; (ii)
  sub-cell limiting is built into the convex limiting framework. This is in
  contrast to the majority of the existing methodologies that consider a
  single limiter per cell providing no sub-cell limiting capabilities.

  From a practical point of view, the implementation of graph-based methods
  is algebraic, meaning that they operate directly on the stencil of the
  spatial discretization. In principle, these methods do not need to use or
  invoke loops on cells or faces of the mesh. Finally, we verify
  convergence rates on various well-known test problems with differing
  regularity. We propose a simple test in order to verify the
  implementation of boundary conditions and their convergence rates.
\end{abstract}

\begin{keywords}
  discontinuous finite elements, graph-based formulation, hyperbolic
  systems, invariant sets, convex limiting, boundary conditions.
\end{keywords}

\begin{AMS}
  35L65, 35Q31, 65M12, 65N30, 65M22, 65M60
\end{AMS}


\section{Introduction}

For the last four decades the field of numerical methods for solving
hyperbolic systems of conservation equations has been dominated by a
paradigm that is commonly referred to as \emph{high-resolution schemes}. These
are numerical methods in which the order of consistency is automatically
adjusted locally (in space) depending on some chosen smoothness criteria;
see early references \cite{Boris1997, Harten1972, Zale1979, Sweby1984}. While a
heuristic high-resolution method is a good starting point for practical
computations, it is not enough to achieve \emph{unconditional robustness}
of the scheme. Here, we define unconditional robustness as the guarantee that
the computed update at a given time step remains admissible and maintains
crucial physical invariants, such that the resulting state can be used again as
input for the next time step update.
Modern approaches for constructing \emph{robust} high-order schemes are
based on the following ingredients~\cite{Zale1979, Osti2022, Maier2023,
Pazner2021, Rueda2022, Chan2023, Kuzmin2020I, Kuzmin2020II, Shu1987,
Cockburn1989, Tez2006, Persson2006, Huerta2012, Zingan2013, Michoski2016,
Euler2018, DiscIndep2019}: (a) a reference low-order method with
mathematically guarantees of robustness; (b) a formally high-order
method that may or may not guarantee any robustness properties; and (c) a
post-processing procedure based on either flux or slope limiting techniques
that blends the low-order and high-order solutions. A particular incarnation of
such postprocessing technique is the \emph{convex limiting} technique that
establishes mathematical guarantees for maintaining a
(local) \emph{invariant-set} property \cite{Euler2018}.

First-order graph-based formulations combined with discontinuous spatial
discretizations are only tersely described in \cite[Section
4.3]{DiscIndep2019}, leaving the path towards high-performance high-order
graph-based methods underdeveloped. Therefore, the first goal of the
present paper is to complete the mathematical theory and discuss
computational aspects of invariant-set preserving schemes for the case of
discontinuous finite elements in comprehensive detail. In particular, we
incorporate boundary conditions into the formulation of the scheme, provide
proofs of invariant-set preservation and discrete entropy inequalities
including the effects of \emph{boundary data}.

The second goal of this paper is to lay out the elementary building blocks
required to construct a robust high-order scheme with convex limiting, using a
graph-based discontinuous Galerkin discretization. This requires the development
and testing of three components: (i) a heuristic high-order method, (ii) a
suitable strategy for constructing local bounds and their relaxation in order to
prevent degradation to first-order accuracy, and (iii) a convex-limiting
procedure that blends the high and low order methods while maintaining an
invariant set.


\subsection{Background: graph-based methods}
A \emph{graph-based formulation}~\cite{Fletcher1983, Selmin1993, Lohner2008} is
a numerical method that operates directly on the \emph{stencil} or
\emph{sparsity-graph} of the discretization and its degrees of freedom,
bypassing entities such as cells, faces, or bilinear forms of the underlying
discretization paradigm. In its simplest incarnation, a graph-based formulation
takes a solution vector consisting of states $\statevector_i^n$ associated with
a (collocated) degree of freedom $i$ at time $t_n$ and computes an updated state
$\statevector_i^{n+1}$ for the time $t_{n+1}$ as follows:
\begin{align}
  \label{eq:graph_formulation}
  m_i \frac{\statevector_i^{n+1} - \statevector_i^n}{\dt_n} + \sum_{j \in
  \mathcal{I}(i)}
  \flux(\statevector_j^n)\bv{c}_{ij} - d_{ij} (\statevector_j^n -
  \statevector_i^n) = \bzero.
\end{align}
Here, $\mathcal{I}(i)$ is the stencil or ``adjacency list'' of the $i$th
degree of freedom, the expression $\sum_{j \in \mathcal{I}(i)}
\flux(\statevector_j^n)\bv{c}_{ij}$ is an algebraic representation of the
inviscid divergence operator, and $d_{ij} (\statevector_j^n -
\statevector_i^n)$ represents artificial viscous fluxes. The graph-based
formulation used in this paper is introduced in detail in
Section~\ref{sec:LowOrder}.

The concept of graph-based methods is quite old---one can argue that its roots
lie in finite difference approximations on unstructured grids. A notable modern
predecessor of what we call graph-based methods is the \emph{group finite
element formulation} of Fletcher \cite{Fletcher1983, Selmin1993}. More recently,
the concept of graph-based methods has been associated with flux-corrected
transport techniques; see \cite{FCTbook2005}. There is a rich record of
applications of graph-based (sometimes also called edge-based) methods in
computational fluid dynamics; see \cite[Chapter 10]{Lohner2008} for a historical
account. From the mathematical point of view, invariant-set preserving
methods~\cite{GuerPop2016} and convex-limiting \cite{Euler2018,
DiscIndep2019} techniques lead to increased interest in graph-based
formulations. Computationally, the algebraic structure of the scheme is a
natural idea in order to manipulate individual degrees of freedom and preserve
pointwise stability properties.


\subsection{Background: invariant set preservation and convex limiting}
As outlined above, one of the fundamental prerequisites for the development
of a robust high-resolution scheme is the availability of a first-order
scheme with mathematically guaranteed robustness properties. The graph-based
methods described in \cite{GuerPop2016, DiscIndep2019} provide such
mathematical assurances, by maintaining the so-called \emph{invariant-set}
property \cite{GuerPop2016}. These first-order methods provide a discretization
agnostic paradigm that works on arbitrary meshes, arbitrary polynomial degree,
and arbitrary space dimension.

The \emph{convex-limiting framework} introduced in~\cite{Euler2018,
DiscIndep2019} provides a limiter technique that works on individual degrees
of freedom and maintains the invariant-set property. By construction, the
limiter formulation is purely algebraic and is not bound to a specific
discretization technique. In the context of finite element discretizations, it
does not distinguish between cell-interior degrees of freedom and those
located at the cell boundary. Therefore, convex limiting is an instance of
sub-cell flux limiting. First-order invariant-set preserving
methods, as well as the convex-limiting technique, were particularly well
received by the \emph{discontinuous Galerkin spectral element}
community~\cite{Pazner2021, Rueda2022, Chan2023} and related sub-cell
limiting efforts~\cite{Kuzmin2020I, Kuzmin2020II}.

\subsection{Graph-based discontinuous Galerkin formulations and objectives}
The framework of invariant-set preserving methods based on convex limiting
have been discussed in detail in the context of \emph{continuous} finite
element formulations~\cite{ryujin-2021-1, Guermond2022, Clayton2023, Tovar2022}.
The low-order stencil-based invariant-set preserving methods have also been
combined effectively with discontinuous Galerkin spectral element methods
in \cite{Pazner2021, Rueda2022, Chan2023}. This requires combining two
different discretization techniques: the low-order method is described and
implemented using a purely algebraic (or \emph{stencil based})
approach, while the high-order method is most frequently described and
implemented with a cell-based formulation. Unifying the high and low order
methods into a single stencil-based description is very desirable, since it
greatly simplifies the analysis, construction, and implementation of the
schemes.

Therefore, we introduce and discuss suitable low-order and high-order
methods based on the algebraic structure introduced in \cite[Section
4.3]{DiscIndep2019}, using the same stencil, and discuss the convex
limiting paradigm adapted to the graph-based discontinuous Galerkin
setting. We will corroborate our analytical formulation with a
computational validation of convergence rates and qualitative fidelity.

\subsection{Related works}
Regarding other methods, we start by noting that the dominant body of DG schemes
are cell-based, formally high-order methods, potentially supplemented with
slope-limiting (e.g. Zhang--Shu limiter \cite{Zhang2010}) or troubled-cell
indicator. These technique do not use a reference first-order scheme that
preserves all entropy inequalities and all invariant sets. While this may look
advantageous, since they do not compute both a
high-order and a low-order solution, they do not have local bounds
to enforce to the high-order method. That explains why the current
slope-limiting paradigm is mostly limited to positivity preservation ($\rho
> 0$ and $e > 0$) and rarely even uses local bounds. A noteworthy exceptions of
this approach are the publications \cite{Pazner2021, Rueda2022} which borrow
some ideas from \cite{DiscIndep2019}. These publications indeed rely on a
low-order finite volume subgrid method, while the high-order method is a
summation-by-parts method. The work in \cite{Pazner2021, Rueda2022} uses
high-order and low method with different spatial discretizations, thus
having different stencils and different basis functions. One of goals of the
present work is to present a simple approach that incorporates some form of
sub-cell limiting using a single spatial discretization.

\subsection{Paper organization}
Section \ref{sec:prelim} is dedicated to preliminaries and notation about
the spatial discretization and hyperbolic systems of conservation laws. In
Section \ref{sec:LowOrder} we define a first-order method by extending the
mathematical description and analysis of graph-based discontinuous Galerkin
methods in terms of invariant-set preservation and entropy inequalities. In
Section \ref{sec:boundary_conditions} we discuss the procedures for
reflecting boundaries, supersonic inflows and outflows as well as subsonic
inflows and outflows. In Section \ref{sec:highorder} we develop a
convex-limited scheme using a robust first-order method and a high-order
method using the entropy-viscosity technique. In Section
\ref{CompExperiments} we provide a series of numerical experiments
demonstrating that the convex-limited scheme exhibits expected convergence
rates. We also propose a simple test for validating the implementation of
outflow and inflow boundary conditions.


\section{Preliminaries}\label{sec:prelim}

We briefly introduce relevant notation, recall the concepts of hyperbolic
conservation laws, invariant sets and entropy inequalities from the literature
and discuss the finite element setting for the proposed graph-based
discontinuous formulations. We loosely follow the notation from
\cite{Euler2018, DiscIndep2019}.

\subsection{Hyperbolic systems, invariant sets, and entropy inequalities}
We are interested in solving partial differential equations of the form
\begin{align}
  \label{AbstractPDE}
  \partial_t \state + \diver \flux(\state) = \bzero,
\end{align}
where $\state = \state(\xcoord,t) \in \mathbb{R}^m$ is the state (here, $m$
is the number of components of the system), $\xcoord \in \mathbb{R}^d$ is
the spatial coordinate with the space dimension $d$, and $\flux(\state):
\mathbb{R}^m \rightarrow \mathbb{R}^{m \times d}$ denotes the flux. The
divergence of the flux is defined as $[\diver \flux(\state)]_i = \sum_{j
  \in \{1:d\}} \partial_{\xcoord_j} [\flux(\state)]_{ij}$. We make the following
assumptions, see also \cite{Viscous2014, GuerPop2016}.
\begin{assumption}[admissible set]
  \label{ass:admissible_set}
  We assume that there is a convex set $\mathcal{A} \subset \mathbb{R}^m$,
  called \emph{admissible set}, such that the matrix
  \begin{align*}
    \partial_x [\flux(\state)\normal] \in \mathbb{R}^{m \times m}
    \quad \text{with} \quad x := \normal \cdot \xcoord
  \end{align*}
  has real eigenvalues for all $\normal \in \mathbb{S}^{d-1}$. We assume that
  the solution of \eqref{AbstractPDE} is understood as the zero-viscosity
  limit $\state = \lim_{\varepsilon \rightarrow 0^+} \state^{\epsilon}$,
  where $\state^{\epsilon}$ solves the parabolic regularization
  \begin{align}
    \label{ParabReg}
    \partial_t \state^{\epsilon} + \diver \flux(\state^{\epsilon}) =
    \epsilon \Delta \state^{\epsilon}.
  \end{align}
\end{assumption}
\begin{assumption}[entropy inequality]
  \label{ass:entropy_inequality}
  Furthermore, we make the important assumption~\cite{Viscous2014,
  GuerPop2016} that there exists at least one entropy-flux pair
  $\{\entropy(\state), \eflux(\state)\}$ associated to \eqref{ParabReg},
  with $\entropy(\state):\mathbb{R}^m \rightarrow \mathbb{R}$ and
  $\eflux(\state):\mathbb{R}^m \rightarrow \mathbb{R}^d$ such that
  \begin{align*}
    \partial_t \entropy(\state^{\epsilon}) + \diver \eflux(\state^{\epsilon}) \leq
    \epsilon \Delta \entropy(\state^{\epsilon}) \ \ \text{for all} \ \epsilon > 0,
  \end{align*}
  see \cite[p. 28]{Godlewski1991}, such that the zero-viscosity limit
  $\state = \lim_{\varepsilon \rightarrow 0^+} \state^{\epsilon}$ satisfies the
  entropy dissipation inequality
  \begin{align*}
    \partial_t \entropy(\state) + \diver \eflux(\state) \leq 0 \, .
  \end{align*}
\end{assumption}
\begin{assumption}[Finite speed of propagation]
  \label{ass:propagation_speed}
  Consider the solution $\state(\xcoord,t)$ of the projected Riemann
  problem
  \begin{align}
  \label{eq:riemann_problem}
    \partial_{t}\state + \partial_x (\flux(\state) \cdot \normal) = 0,
    \quad\text{where}\quad
    \statevector_0 =
    \begin{cases}
      \state_{L} &\text{if } x \leq 0, \\
      \state_{R} &\text{if } x > 0, \\
    \end{cases}
  \end{align}
  and $x := \xcoord \cdot \normal$. Given the solution
  $\state(\xcoord,t)$ of the Riemann problem \eqref{eq:riemann_problem}, we
  assume that there is a maximum wavespeed of propagation, denoted as
  $\lambda_{\max}(\state_{L}, \state_{R}, \normal) > 0$, such that
  \begin{align*}
    \state(\xcoord,t) = \state_{L} \text{ for } \xcoord\cdot\normal \leq
    \tfrac{1}{2}\ \ \text{and} \ \
    \state(\xcoord,t) = \state_{R} \text{ for } \xcoord\cdot\normal \geq
    \tfrac{1}{2} \ \
  \end{align*}
  provided that $t \lambda_{\max}(\state_{L}, \state_{R}, \normal) \leq
  \tfrac{1}{2}$.
\end{assumption}
\begin{assumption}[Invariant set]
  \label{ass:invariant_set}
  We assume that the Riemann problem \eqref{eq:riemann_problem}
  satisfies an invariant-set property of the
  form:
  There exist convex subsets $\mathcal{B}\subset\mathcal{A}$ such that
  $\overline{\state}(t) := \int_{-1/2}^{1/2}\state(x,t)\,\text{d}x\, \in
  \mathcal{B}$, provided that $\state(x,t)$ is the unique entropy solution
  of the Riemann problem with left state $\state_{L}\in\mathcal{B}$ and
  right state $\state_{R}\in\mathcal{B}$ and that $t
  \lambda_{\max}(\state_{L}, \state_{R}, \normal) \leq \tfrac{1}{2}$.
  The precise description of the set $\mathcal{B} \subset \mathbb{R}^m$
  depends on the initial data and the hyperbolic system at hand; see
  Section~\ref{subse:euler_equations} for a detailed summary for the
  compressible Euler equations.
\end{assumption}
For additional background on entropy inequalities, parabolic
regularization principles and invariant sets we refer the reader to
\cite{Chueh1977, Hoff1985, Frid2001, Viscous2014, GuerPop2016} and
references therein. A general reference on hyperbolic systems of
conservation laws is \cite{Godlewski1991}.

\subsection{The compressible Euler equations}
\label{subse:euler_equations}
For the compressible Euler equations~\cite{Toro2009,Godlewski1991} the
conserved state is $\state = [\rho, \mom, \totme]^\transp \in
\mathbb{R}^{m}$ with $m = d + 2$ is comprised of the density $\rho$, momentum
$\mom$, and total energy $\totme$. The flux $\flux(\state):\mathbb{R}^{d+2}
\rightarrow \mathbb{R}^{(d+2) \times d}$ is given by
\begin{align}
  \label{eq:euler_flux}
  \flux(\state) :=
  \begin{bmatrix}
    \mom \;,\; \rho^{-1} \mom \otimes \mom + p\,\identity\;,\;
\frac{\mom}{\rho}(\totme+p)
  \end{bmatrix}^T,
\end{align}
where $\identity$ is the $d\times d$ identity matrix, and $p$ is the
pressure. For the sake of simplicity we assume that the system is closed
with a polytropic ideal gas equation of state~\cite{Toro2009,Godlewski1991}.
This implies that
\begin{align}\label{PressurEuler}
  p \;=\; (\gamma -1)\,\varepsilon(\state),
  \quad
  \varepsilon(\state)\;:=\;
  \totme - \frac{1}{2}\frac{|\mom|^2}{\rho},
\end{align}
where $\varepsilon(\state)$ is the internal energy and $\gamma>1$ denotes
the ratio of specific heats. The admissible set $\mathcal{A}$ is given by
\begin{align}\label{AddSet}
  \mathcal{A} = \big\{ [\rho, \mom, \totme]^\transp \in \mathbb{R}^{d+2}
  \, | \, \rho > 0 \ \text{and} \ \varepsilon(\state) > 0 \big\}.
\end{align}
For the case of a polytropic ideal gas equation of state, the Euler
equations admit a \emph{mathematical} entropy-flux pair
$\big\{\eta(\state),\eflux(\state)\big\}$ given by
\begin{gather}
  \label{EntropyEuler}
  \eta(\state) := - \rho s(\state), \quad
  \eflux(\state) := - \mom s(\state), \quad
  \text{where}\; s(\state) := \log (\rho^{-\gamma}p(\state)).
\end{gather}
Here, $s(\state)$ denotes the \emph{specific} entropy. We note that this
choice of entropy-flux pair $\{\eta, \eflux\}$ is not unique, as there are
infinitely many entropy-flux pairs for the Euler equations
\cite{Haten1998, Hughes1986}.

Let $\state(\xcoord, t)$ be the unique entropy solution of the Riemann
problem \eqref{eq:riemann_problem}. Then, the Riemann average
$\overline{\state}(t)$ belongs to the invariant set
\begin{align*}
  \mathcal{B} &= \big\{ \state \in \mathcal{A} \, | \, s(\state) \geq
  \min\{s(\state_L), s(\state_R)\} \big\},
\end{align*}
provided that the initial data is admissible; \ie, $\state_L, \state_R \in
\mathcal{A}$; and that $t \lambda_{\max} \leq \tfrac{1}{2}$; see
Assumption~\ref{ass:propagation_speed}. Note that $\mathcal{B}$ characterizes a
minimum principle of the specific entropy $s$.

\subsection{Space discretization}
We consider a quadrilateral or hexahedral mesh $\triangulation$ and a
corresponding nodal, scalar-valued discontinuous finite element space
$\mathbb{V}_h$ for each component of the hyperbolic system:
\begin{align*}
  \mathbb{V}_h &= \big \{ v_h(\xcoord) \in
  \LtwoScal \;\big|\;
  (v_h \circ \locglobmap_\element)(\widehat{\xcoord}) \in
  \mathbb{Q}^k(\widehat{\element}) \;\forall \element \in \triangulation
    \big\}.
\end{align*}
Here, $\locglobmap_\element:\widehat{\element}\to\element$ denotes a
diffeomorphism mapping the unit square or unit cube $\widehat{\element}$ to
the physical element $\element \in \triangulation$ and $\mathbb{Q}^k(\hat
K)$ is the space of bi-, or trilinear Lagrange polynomials of degree $k$
defined on the reference element $\hat K$. That is, the Lagrangian shape
functions are defined by enforcing the property
$\widehat{\phi}_k(\widehat{\xcoord}_j) = \delta_{jk}$ with the
Gau\ss-Lobatto points $\{\widehat{\xcoord}_k\}_{k \in \mathcal{N}}$ defined
on the reference element. Here, $\mathcal{N}$ is the number of local
degrees of freedom on the cell $K$. The basis functions on the physical
element $K$ are then generated using the reference-to-physical map
$\locglobmap_\element$: More precisely, for each physical element
$\element$, we define shape functions by setting
$\phi_{\element,i}(\xcoord) :=
\widehat{\phi}_i(\locglobmap_\element^{-1}(\xcoord))$ for all $i \in
\mathcal{N}$. More detail on the construction and implementation of finite
element spaces we refer the reader to \cite{Ciarlet1978, GuerErn2044}.

\begin{remark}[Choice of basis functions]
  We have chosen quadrilateral and hexahedral elements for implementational
  convenience. Our framework can accommodate the usual set of simplicial
  elements, tensor-product elements, or even more exotic spatial
  discretizations, such as rational barycentric coordinates on arbitrary
  polygons. The only restriction is that the chosen basis
  $\{\phi_i(\boldsymbol{x})\}$ is interpolatory, has non-negative mass,
  viz., $\int\phi_i(\boldsymbol{x})\text{d}\boldsymbol{x} \ge 0$, and
  satisfies the partition of unity property $\sum_{i \in \mathcal{V}}
  \phi_i(\boldsymbol{x}) = 1$ for all $\boldsymbol{x}$ in the domain. The
  precise location of the nodes is of no consequence.
\end{remark}

We define $\vertices=\big\{i \in
\mathbb{N} \, | \, 1 \leq i \leq \text{dim}(\FESpaceNodalScal)\big\}$
as the index set of global, scalar-valued degrees of freedom corresponding to
$\FESpaceNodalScal$. Similarly, we introduce the set of global shape functions
$\{\phi_{i}(\xcoord)\}_{i \in \vertices}$ and the set of collocation points
$\{\xcoord_{i}\}_{i \in \vertices}$. Note that, in the context of nodal
discontinuous finite elements, different degrees of freedom can be collocated at
the same spatial coordinates. In other words, the situation $\xcoord_{i} =
\xcoord_j$ with $i \not= j \in \vertices$ may occur whenever $\xcoord_{i}$
lies on a vertex, edge or face of the mesh. We introduce the index sets:
\begin{align*}
  \mathcal{I}(\element) &= \big \{ j \in \vertices \;\big|\;
  \text{supp}(\HypBasisScal_j) \cap \element \not = \emptyset \big\},
  \\
  \mathcal{I}(\partial\element) &= \big \{ j \in \vertices \;\big|\;
  \HypBasisScal_j|_{\partial\element} \not \equiv \bzero \big\},
  \\
  \mathcal{I}(\bdry) &= \big \{ j \in \vertices \;\big|\;
  \HypBasisScal_j|_{\bdry} \not \equiv \bzero \big\}.
\end{align*}
Note that the set $\mathcal{I}(\partial\element)$ also contains indices of
shape functions that have no support on $\element$ but on a neighboring element
of $\element$. We also note that when using finite elements with Gau\ss-Lobatto
points the situation $j \in \mathcal{I}(\bdry)$ can only occur if
$\xcoord_j$ lies on the boundary $\bdry$. We assume that the basis
functions satisfy the following partition of unity property for each
element $K$:
\begin{gather}
  \label{partUnit}
  \begin{gathered}
    \sum_{j \in \mathcal{I}(\element)} \phi_{j}(\xcoord) = 1,
    \quad \xcoord \in \element.
  \end{gathered}
\end{gather}

Finally, we introduce some matrices to be used for the
algebraic discretization. We define the consistent mass matrix with entries $m_{ij} \in
\mathbb{R}$ and lumped mass matrix with entries $m_i \in \mathbb{R}$ as
\begin{align}
  \label{eq:mass_matrices}
  m_{ij} \;:=\; \int_{\element} \HypBasisScal_i \HypBasisScal_j\dx,
  \qquad
  m_i \;:=\; \int_{\element} \HypBasisScal_i \dx.
\end{align}
In order to discretize the divergence of the flux, we introduce
a vector-valued matrix
\begin{align}
  \label{eq:cij_def1}
  \bv{c}_{ij} \;:=\;
  \begin{cases}
    \bv{c}_{ij}^{\element_i}-\bv{c}_{ij}^{\partial\element_i}
    &\text{if }j \in \mathcal{I}(\element_i),
    \\
    \bv{c}_{ij}^{\partial\element_i}
    &\text{if }j \in \mathcal{V} \backslash \mathcal{I}(\element_i),
  \end{cases}
\end{align}
where $\element_i \in \triangulation$ is the uniquely defined element satisfying
$\text{supp}(\HypBasisScal_i)\cap K_i \not = \emptyset$, and
\begin{align}
  \label{eq:cij_def2}
  \bv{c}_{ij}^{\element} := \int_{\element} \nabla\HypBasisScal_j
  \HypBasisScal_i \dx
  \, ,  \
  \bv{c}_{ij}^{\partial\element} :=
  \tfrac{1}{2} \int_{\partial K}
  \HypBasisScal_{j} \HypBasisScal_i \normal_\element \ds
  \, , \
  \bv{c}_{i}^{\bdry} :=
  \tfrac{1}{2} \int_{\partial K \cap \bdry}
  \HypBasisScal_i \normal_\element \ds \, ,
\end{align}
where $\normal_\element$ is the outwards pointing normal of the element
$\element$.
The stencil at the node $i$ is defined as
$
  \mathcal{I}(i) = \{j \in \vertices \;|\; \bv{c}_{ij} \not = 0 \}.
$
From definitions \eqref{eq:cij_def1} and \eqref{eq:cij_def2} it follows
that $\bv{c}_{ii} = \bzero$, see Lemma~\ref{PropSkew} for more
details.


\section{Low-order method}
\label{sec:LowOrder}
We now introduce a first-order, graph-based method for
approximating~\eqref{AbstractPDE}. The scheme is based on results
previously reported in~\cite{GuerPop2016, DiscIndep2019, Maier2023}. A
particular novelty of the proposed scheme is the inclusion of boundary
conditions in the formulation, which will be discussed in more detail in
Section~\ref{sec:boundary_conditions}. We introduce the scheme in
Section~\ref{sec:low_order_algebraic_description}, discuss conservation in
Section~\ref{sec:low_order_conservation}, and prove invariant-set and
entropy-dissipation properties in Section~\ref{sec:low_order_stability}.

\subsection{Description of the low-order scheme}
\label{sec:low_order_algebraic_description}
Let
$\uvect_h^n(\xcoord) = \sum_{i\in\vertices}
\statevector_i^n\HypBasisScal_i(\xcoord)$ be a given (discontinuous) finite
element approximation at time $t_n$, where the coefficients shall be
admissible states $\statevector_i^n\in\mathcal{A}$. In addition, let
$\statevector_i^{\bdry,n} \in \mathcal{A}$ with $i \in \mathcal{I}(\bdry)$, be
a given vector of admissible boundary data for time $t_n$. We then
compute the low order update $\statevector_i^{\low,n+1}$ at time
$t_{n+1}=t_n+\dt_n$ as follows:
\begin{multline}
  \label{cijdijscheme}
  m_i \frac{\statevector_i^{\low, n+1} - \statevector_i^n}{\dt_n}
  + \sum_{j \in \mathcal{I}(i)}\big\{\flux(\statevector_j^n)\,\bv{c}_{ij} -
  d_{ij}^{\low,n} (\statevector_j^n - \statevector_i^n)\big\}
  \\
  + \flux(\statevector_i^{\bdry,n})\,\bv{c}_{i}^{\bdry} -
  \loworderdi
  (\statevector_i^{\bdry,n} - \statevector_i^n) = \bzero,
  \quad\text{for }i\in\vertices.
\end{multline}
Scheme \eqref{cijdijscheme} is an algebraic formulation of a discontinuous
Galerkin formulation \cite{DiscIndep2019} in which the underlying weak
formulation is hidden in the matrices $m_i$ and $\bv{c}_{ij}$. We note
that \eqref{cijdijscheme} is based on a \emph{central flux}
approximation~\cite{DiscIndep2019} with subsequent interpolation, using
the nodal property of the shape functions~\cite{Hesthaven2008}:
\begin{multline*}
  \sum_{j \in \mathcal{I}(i)}\flux(\statevector_j^n)\,\bv{c}_{ij}
  \;=\;
  \int_{\element_i}\nabla\cdot\Big(\sum_{j \in
  \mathcal{I}(\element_i)}\flux(\statevector_j^n)
  \HypBasisScal_j\Big)\,\HypBasisScal_i\dx
  \\
  +\frac12\int_{\partial \element_i}\Big(
    \sum_{j \in \mathcal{I}(\partial\element_i)\setminus\mathcal{I}(\element_i)}
    \flux(\statevector_j^n)\,\HypBasisScal_j
    \;-\;
    \sum_{j \in \mathcal{I}(\partial\element_i)\cap\mathcal{I}(\element_i)}
    \flux(\statevector_j^n)\,\HypBasisScal_j
  \Big)\cdot\normal_{\element_i}\, \HypBasisScal_i\dx
  \\
  \;\approx\;
  - \int_{\element_i}\flux(\uvect^n_h)\cdot\nabla\HypBasisScal_i\dx
  +\int_{\partial \element_i}
  \big\{\!\!\big\{\flux(\uvect^n_h)\big\}\!\!\big\}_{\partial\element_i}
  \cdot\normal_{\element_i}\, \HypBasisScal_i\dx,
\end{multline*}
where $\{\!\!\{\flux(\uvect^n_h)\}\!\!\}_{\partial\element_i}$ denotes
the average between the two adjacent states. For more details, see also
\cite{DiscIndep2019}.

Scheme \eqref{cijdijscheme} imposes the boundary data weakly with a jump
condition, defining a boundary flux through an outer state with the help of
boundary conditions in the usual DG form, see e.\,g.,~\cite{ Bassi1997,
Hartmann2002, Hesthaven2008}, albeit with a simplified, \emph{diagonal}
flux, $\flux(\statevector_i^{\bdry,n})\,\bv{c}_{i}^{\bdry}$, as detailed in
Sec.~\ref{sec:boundary_conditions} below. Since $\bv{c}_i=\bzero$ and
$\loworderdi=0$ for $i\in\vertices\setminus\mathcal{I}(\bdry)$,
\eqref{cijdijscheme} is valid for both boundary and interior degrees of
freedom simultaneously. We defer the discussion on how to construct a
suitable boundary data vector $\statevector_i^{\bdry,n}$ to
Section~\ref{sec:boundary_conditions}.

The graph viscosities $d_{ij}^{\low,n}, \loworderdi > 0$ in
\eqref{cijdijscheme} are computed as follows:
\begin{align}
  \label{eq:dij_low_order}
  \begin{cases}
    \begin{aligned}
      d_{ij}^{\low,n}
      &:= |\bv{c}_{ij}|_{\ell^2} \lambda_{\text{max}}^+(\statevector_i^{n},
      \statevector_j^{n}, \normal_{ij}),
      &\text{where }&
      \normal_{ij}= \frac{\bv{c}_{ij}}{|\bv{c}_{ij}|_{\ell^2}},
      \quad\text{for }i\not=j,
      \\[0.25em]
      \loworderdi
      &:=
      |\bv{c}_{i}^{\bdry}|_{\ell^2}
      \lambda_{\text{max}}^+(\statevector_i^{n}, \statevector_i^{\bdry,n},
      \normal_{i}),
      &\text{where }&
      \normal_{i}=
      \frac{\bv{c}_{i}^{\bdry}}{|\bv{c}_{i}^{\bdry}|_{\ell^2}}.
    \end{aligned}
  \end{cases}
\end{align}
Here, $\lambda_{\text{max}}^+(\statevector_L, \statevector_R, \normal):
\mathbb{R}^m \times \mathbb{R}^m \times \mathbb{S}^{d-1} \longrightarrow
\mathbb{R}^+$ is an upper bound of the maximum wavespeed of the
projected Riemann problem \eqref{eq:riemann_problem}~\cite{GuerPop2016}.
We also introduce $d_{ii}^{\low,n}$:
\begin{align}
  \label{eq:dij_low_order_supplement}
  d_{ii}^{\low,n} :=
  -\sum_{j \in \mathcal{I}(i),j\not=i}
  d_{ij}^{\low,n} \;-\; \loworderdi .
\end{align}
This definition plays a role in the computation of the largest admissible time-step
size, see \eqref{CFLcond}, however, we note that
$d_{ii}^{\low,n}$ is not needed in order to compute the update
$\statevector_{i}^{\low,n+1}$ with \eqref{cijdijscheme}.

The low order scheme \eqref{cijdijscheme}--\eqref{eq:dij_low_order} is a
first-order invariant-set preserving approximation in the spirit of
\cite{GuerPop2016}. To this end we rigorously establish conservation
properties in Section~\ref{sec:low_order_conservation}
and derive a \emph{bar state} characterization
that in turn implies an invariant-set property and discrete entropy
inequalities (see Lemma~\ref{EntStabLemma}). We have summarized an
adaptation of \eqref{cijdijscheme}--\eqref{eq:dij_low_order} for the
prescription of boundary conditions in the context of \emph{continuous}
finite element discretizations in Appendix~\ref{AppCg}.

\subsection{Conservation properties}
\label{sec:low_order_conservation}
In order to establish conservation properties of the scheme we make use of
some auxiliary results.

\begin{lemma}[Partition of unity property]\label{LemPart}
  It holds that
  \begin{align}
  \label{PartitionBetter}
    \sum_{j \in \mathcal{I}(i)} \bv{c}_{ij} + \bv{c}_{i}^{\bdry} = \bzero,
    \quad\text{for all } i \in \vertices.
  \end{align}
\end{lemma}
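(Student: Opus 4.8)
The plan is to expand the stencil sum via the case definition \eqref{eq:cij_def1}, separate each $\bv{c}_{ij}$ into its volume part $\bv{c}_{ij}^{\element_i}$ and surface part $\bv{c}_{ij}^{\partial\element_i}$, and then show that the volume parts cancel by the partition of unity \eqref{partUnit} while the surface parts collapse to exactly $-\bv{c}_i^{\bdry}$. First I would fix $i \in \vertices$ together with its unique home element $\element_i$, and note that since $\bv{c}_{ij} = \bzero$ whenever $j \notin \mathcal{I}(i)$, the sum over the stencil may be freely extended to all of $\vertices$; moreover, by the note following the index-set definitions, the basis functions whose home element differs from $\element_i$ are precisely those indexed by $\vertices \setminus \mathcal{I}(\element_i)$. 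Splitting according to \eqref{eq:cij_def1} then gives
\begin{equation*}
  \sum_{j \in \mathcal{I}(i)} \bv{c}_{ij}
  = \sum_{j \in \mathcal{I}(\element_i)} \bv{c}_{ij}^{\element_i}
  - \sum_{j \in \mathcal{I}(\element_i)} \bv{c}_{ij}^{\partial\element_i}
  + \sum_{j \in \vertices \setminus \mathcal{I}(\element_i)} \bv{c}_{ij}^{\partial\element_i}.
\end{equation*}

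Next I would dispatch the volume term: pulling the sum inside the integral in \eqref{eq:cij_def2} and invoking \eqref{partUnit} on $\element_i$ yields $\sum_{j \in \mathcal{I}(\element_i)} \nabla \HypBasisScal_j = \nabla 1 = \bzero$ pointwise on $\element_i$, so the first sum vanishes identically.

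The heart of the argument is the treatment of the two surface sums, and this is the step I expect to require the most care. I would decompose $\partial\element_i$ into its individual faces and use the nodal, discontinuous structure to observe that on an interior face shared with a neighbor $\element'$ the only basis functions with nonzero trace are those of $\element_i$ and those of $\element'$, whereas on a face contained in $\bdry$ only the basis functions of $\element_i$ survive. Applying \eqref{partUnit} separately on $\element_i$ and on each neighbor $\element'$ then shows $\sum_{j \in \mathcal{I}(\element_i)} \HypBasisScal_j = 1$ on all of $\partial\element_i$, while $\sum_{j \in \vertices \setminus \mathcal{I}(\element_i)} \HypBasisScal_j$ equals $1$ on the interior faces and $0$ on the faces lying in $\bdry$. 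Consequently the two surface sums combine as
\begin{equation*}
  -\sum_{j \in \mathcal{I}(\element_i)} \bv{c}_{ij}^{\partial\element_i}
  + \sum_{j \in \vertices \setminus \mathcal{I}(\element_i)} \bv{c}_{ij}^{\partial\element_i}
  = \tfrac12 \Big( - \int_{\partial\element_i} + \int_{\partial\element_i \setminus \bdry} \Big) \HypBasisScal_i\, \normal_{\element_i}\,\ds
  = -\tfrac12 \int_{\partial\element_i \cap \bdry} \HypBasisScal_i\, \normal_{\element_i}\,\ds
  = -\bv{c}_i^{\bdry},
\end{equation*}
the interior-face contributions cancelling and only the boundary-face contribution surviving. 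Together with the vanishing volume term this gives $\sum_{j \in \mathcal{I}(i)} \bv{c}_{ij} = -\bv{c}_i^{\bdry}$, which is exactly \eqref{PartitionBetter}. The principal obstacle, as indicated, is the bookkeeping of which traces survive on which faces; the conforming quadrilateral/hexahedral mesh together with the Gau\ss--Lobatto nodal structure is precisely what guarantees that the external traces assemble to the partition-of-unity value $1$ on every interior face and to $0$ on the domain boundary.
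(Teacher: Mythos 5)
Your proposal is correct and follows essentially the same route as the paper's proof: the same splitting of $\bv{c}_{ij}$ via \eqref{eq:cij_def1} into volume and surface parts, the same use of the partition of unity \eqref{partUnit} to kill the volume term (on $\element_i$) and to collapse the exterior traces to $1$ on interior faces and $0$ on $\bdry$, leaving exactly $-\bv{c}_i^{\bdry}$. The only cosmetic differences are that you move $\bv{c}_i^{\bdry}$ to the right-hand side rather than carrying it inside the sum, and you spell out the face-by-face bookkeeping that the paper performs implicitly.
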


\begin{proof}
  Let $i\in\vertices$ be arbitrary and let $\element_i$ denote the cell
  with $i\in\mathcal{I}(\element)$. Using the definitions \eqref{eq:cij_def1}
  and \eqref{eq:cij_def2}, as well as property \eqref{partUnit}, we can
  compute
  \begin{align*}
    \sum_{j \in \mathcal{I}(i)} \bv{c}_{ij} + \bv{c}_{i}^{\bdry}
    &\;=\;
    \sum_{j \in \mathcal{I}(\element_i)}
    \big(\bv{c}_{ij}^{\element_i}
    - \bv{c}_{ij}^{\partial\element_i} \big)
    \,+\, \sum_{j \in \mathcal{I}(i)\backslash\mathcal{I}(\element_i)}
    \bv{c}_{ij}^{\partial\element_i}
    \,+\, \bv{c}_{i}^{\bdry}
    \\
    &\;=\;
    \int_{\element_i}\nabla
    \Big(\sum_{j \in
\mathcal{I}(\element_i)}\HypBasisScal_j\Big)\,\HypBasisScal_i \dx
    -
    \frac12 \int_{\partial\element_i}
    \Big(\sum_{j \in \mathcal{I}(\element_i)}\HypBasisScal_j\Big)\,
    \HypBasisScal_i \normal_{\element_i}\ds
    \\&\qquad
    \,+\,
    \frac12 \int_{\partial\element_i\setminus\partial\Omega}
    \Big(\sum_{j \in
\mathcal{I}(i)\setminus\mathcal{I}(\element_i)}\HypBasisScal_j\Big)\,
    \HypBasisScal_i \normal_{\element_i}\ds
    \,+\,
    \frac12 \int_{\partial\element_i\cap\partial\Omega} \HypBasisScal_i
    \normal_{\element_i}\ds.
    \\
    &\;=\;
    -
    \frac12 \int_{\partial\element_i} \HypBasisScal_i \normal_{\element_i}\ds
    \,+\,
    \frac12 \int_{\partial\element_i\setminus\partial\Omega}
    \HypBasisScal_i \normal_{\element_i}\ds
    \,+\,
    \frac12 \int_{\partial\element_i\cap\partial\Omega} \HypBasisScal_i
    \normal_{\element_i}\ds
    \\[0.25em]
    &\;=\; \bzero.
  \end{align*}
\end{proof}

\begin{lemma}\label{PropSkew} The matrix $\bv{c}_{ij}$ is
  skew-symmetric:
  \begin{align}
    \label{skewsymm}
    \bv{c}_{ij} &= - \bv{c}_{ji}
    \quad\text{for all }i,j\in\vertices.
  \end{align}
\end{lemma}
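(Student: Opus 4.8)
The plan is to decompose the claim into cases according to the geometric relationship between the two cells $\element_i$ and $\element_j$ that own the degrees of freedom $i$ and $j$, exploiting that in the discontinuous setting each index is associated to exactly one cell and that $\HypBasisScal_i$ is supported on $\overline{\element_i}$ alone. First I would observe that $\bv{c}_{ij}$ can be nonzero only when the (closed) supports of $\HypBasisScal_i$ and $\HypBasisScal_j$ have nonempty intersection, which for a quadrilateral or hexahedral mesh forces either $\element_i=\element_j$, or $\element_i$ and $\element_j$ to share a full $(d-1)$-dimensional face $F$. In every other configuration both $\bv{c}_{ij}$ and $\bv{c}_{ji}$ vanish (any surviving face integral is taken over a subset of $\partial\element_i$ of measure zero), so the identity is immediate there.

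For the co-cell case $\element_i=\element_j=:\element$, both $i,j\in\mathcal{I}(\element)$, so the first branch of \eqref{eq:cij_def1} is active for $\bv{c}_{ij}$ and for $\bv{c}_{ji}$. I would then use two facts: that $\bv{c}_{ij}^{\partial\element}$ is manifestly symmetric in $(i,j)$, and that the product rule together with the divergence theorem yields
\begin{align*}
  \bv{c}_{ij}^{\element}+\bv{c}_{ji}^{\element}
  = \int_{\element}\nabla(\HypBasisScal_i\HypBasisScal_j)\dx
  = \int_{\partial\element}\HypBasisScal_i\HypBasisScal_j\,\normal_\element\ds
  = 2\,\bv{c}_{ij}^{\partial\element}.
\end{align*}
Adding $\bv{c}_{ij}$ and $\bv{c}_{ji}$ then cancels both the volume and the surface contributions, giving $\bv{c}_{ij}+\bv{c}_{ji}=\bzero$. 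Taking $i=j$ in this computation also recovers $\bv{c}_{ii}=\bzero$, as announced after \eqref{eq:cij_def2}.

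For the cross-cell case, where $\element_i\neq\element_j$ share the face $F$, I note that $j\notin\mathcal{I}(\element_i)$ and $i\notin\mathcal{I}(\element_j)$, so the second branch of \eqref{eq:cij_def1} applies and $\bv{c}_{ij}=\bv{c}_{ij}^{\partial\element_i}$, $\bv{c}_{ji}=\bv{c}_{ji}^{\partial\element_j}$. Since $\HypBasisScal_j$ vanishes on $\partial\element_i\setminus F$ and $\HypBasisScal_i$ vanishes on $\partial\element_j\setminus F$, both surface integrals collapse to integrals over $F$, on which the two traces of $\HypBasisScal_i$ and of $\HypBasisScal_j$ coincide from either side. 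The decisive geometric fact is that the outward element normals satisfy $\normal_{\element_i}=-\normal_{\element_j}$ on $F$, whence
\begin{align*}
  \bv{c}_{ij}+\bv{c}_{ji}
  = \tfrac12\int_{F}\HypBasisScal_i\HypBasisScal_j\,
    (\normal_{\element_i}+\normal_{\element_j})\ds
  = \bzero.
\end{align*}

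Finally, I expect the only real subtlety --- rather than any genuine difficulty --- to be the discontinuous-Galerkin bookkeeping: correctly identifying which branch of the piecewise definition \eqref{eq:cij_def1} is active for $\bv{c}_{ij}$ versus $\bv{c}_{ji}$, and accounting for the fact that several degrees of freedom may be collocated at a shared vertex, edge, or face. Once the cases are separated as above, the analytic content reduces to a single integration by parts in the co-cell case and to the opposing orientation of element normals on shared faces in the cross-cell case, both of which are elementary.
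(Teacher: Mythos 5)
Your proof is correct and follows essentially the same route as the paper's: a case split on $\element_i=\element_j$ versus $\element_i\neq\element_j$, with integration by parts (your product-rule/divergence-theorem identity is the same computation) handling the co-cell case and the opposing normals $\normal_{\element_i}=-\normal_{\element_j}$ on the shared face handling the cross-cell case. Your version merely spells out the bookkeeping (trivial non-interacting case, collapse of the surface integrals to the shared face, the $i=j$ corollary $\bv{c}_{ii}=\bzero$) that the paper leaves implicit.
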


\begin{proof}
  The statement is an immediate consequence of definition
  \eqref{eq:cij_def1} and the fact that integration by parts shows
  $\bv{c}_{ij}^{K_i} - \bv{c}_{ij}^{\partial K_i} = - \bv{c}_{ji}^{\partial
  K_i} + \bv{c}_{ji}^{K_i} = -(\bv{c}_{ji}^{\partial K_j} -
  \bv{c}_{ji}^{K_j})$ for $K_i=K_j$; or that $\bv{c}_{ij}^{\partial K_i} =
  \bv{c}_{ji}^{\partial K_i} = - \bv{c}_{ji}^{\partial K_j}$ whenever
  $K_i\not=K_j$ due to the fact that
  $\normal_{\element_i}=-\normal_{\element_j}$ on $\partial K_i\cap
  \partial K_j$.
\end{proof}
Conservation equation \eqref{AbstractPDE} implies that the following
flux-balance of the state $\statevector_i$ between time $t^n$ and $t^{n+1}$
holds true:
\begin{align*}
\int_{\Omega} \state(\xcoord, t^{n+1}) \dx
+ \int_{t^n}^{t^{n+1}} \int_{\bdry} \flux(\state(\xcoord, s))
\normal_{\element} \ds \mathrm{d}s = \int_{\Omega} \state(\xcoord,
t^{n}) \dx.
\end{align*}
We now show that the scheme \eqref{cijdijscheme}
satisfies a discrete counterpart of such a flux balance. We start by
deriving an explicit \emph{skew-symmetric} counterpart of scheme
\eqref{cijdijscheme}:

\begin{lemma}[Skew symmetric form]
  \label{LemSkew}
  Using property \eqref{PartitionBetter} scheme \eqref{cijdijscheme} can be
  written equivalently as
  \begin{align}
    \label{skewSymmetric}
    m_i\,
    \big(\statevector_i^{\low,n+1} - \statevector_i^n\big)
    \;+\;
    \sum_{j \in \mathcal{I}(i)} \bv{F}_{ij}^{\low} +
    \bv{F}_{i}^{\bdry,\low} = \bzero,
  \end{align}
  with the fluxes
  \begin{align*}
    \bv{F}_{ij}^{\low} &:= \dt_n \big(\flux(\statevector_j^n) +
    \flux(\statevector_i^{n})\big)\,\bv{c}_{ij} - \dt_n d_{ij}^{\low,n}
    (\statevector_j^n - \statevector_i^n),
    \\[0.5em]
    \bv{F}_{i}^{\bdry,\low} &:= \dt_n \big(\flux(\statevector_i^{\bdry,n}) +
    \flux(\statevector_i^{n})\big)\,\bv{c}_{i}^{\bdry} - \dt_n \loworderdi
    (\statevector_i^{\bdry,n} - \statevector_i^n),
  \end{align*}
  where the vectors $\bv{F}_{ij}^{\low} \in \mathbb{R}^m$ are skew
  symmetric, i.\,e., $\bv{F}_{ij}^{\low} = - \bv{F}_{ji}^{\low}$.
  Note that $\bv{F}_{i}^{\bdry,\low}$ is a boundary flux.
\end{lemma}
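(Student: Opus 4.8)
The plan is to obtain \eqref{skewSymmetric} from \eqref{cijdijscheme} by a purely algebraic rearrangement and then to verify the skew-symmetry of the interior fluxes directly. First I would clear the time step by multiplying \eqref{cijdijscheme} through by $\dt_n$, which turns the mass term into $m_i(\statevector_i^{\low,n+1} - \statevector_i^n)$ and attaches a factor $\dt_n$ to each of the remaining contributions. Comparing this with the target, the fluxes $\bv{F}_{ij}^{\low}$ and $\bv{F}_i^{\bdry,\low}$ differ from the corresponding terms of the $\dt_n$-scaled scheme only by the extra summand $\flux(\statevector_i^n)$ appearing next to $\flux(\statevector_j^n)$ (respectively next to $\flux(\statevector_i^{\bdry,n})$). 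Hence the whole content of the equivalence is to show that these additional terms cancel.

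The key observation I would exploit is that $\flux(\statevector_i^n)$ is independent of the summation index $j$ and can therefore be pulled out of the sum, so that the partition-of-unity identity of Lemma~\ref{LemPart} applies. Concretely, the identity to be used is
\begin{align*}
  \sum_{j \in \mathcal{I}(i)} \flux(\statevector_i^n)\,\bv{c}_{ij}
  + \flux(\statevector_i^n)\,\bv{c}_{i}^{\bdry}
  = \flux(\statevector_i^n)\Big(\sum_{j \in \mathcal{I}(i)} \bv{c}_{ij} + \bv{c}_{i}^{\bdry}\Big)
  = \bzero.
\end{align*}
Adding this vanishing quantity, scaled by $\dt_n$, to the scheme distributes $\dt_n\flux(\statevector_i^n)\,\bv{c}_{ij}$ into each interior summand and $\dt_n\flux(\statevector_i^n)\,\bv{c}_{i}^{\bdry}$ into the boundary term, reproducing exactly $\sum_{j}\bv{F}_{ij}^{\low} + \bv{F}_i^{\bdry,\low}$ and thereby establishing \eqref{skewSymmetric}. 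Since nothing has been added but zero, the two formulations are equivalent.

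It then remains to verify $\bv{F}_{ij}^{\low} = -\bv{F}_{ji}^{\low}$. Swapping $i$ and $j$ in the definition and invoking Lemma~\ref{PropSkew} gives $\bv{c}_{ji} = -\bv{c}_{ij}$, so the convective part flips sign while the symmetric flux sum $\flux(\statevector_j^n) + \flux(\statevector_i^n)$ is unchanged; likewise $(\statevector_i^n - \statevector_j^n) = -(\statevector_j^n - \statevector_i^n)$ takes care of the viscous difference. The one genuine obstacle is the symmetry of the graph viscosity, $d_{ij}^{\low,n} = d_{ji}^{\low,n}$. From \eqref{eq:dij_low_order} I would note that $|\bv{c}_{ij}|_{\ell^2} = |\bv{c}_{ji}|_{\ell^2}$ by Lemma~\ref{PropSkew} and that $\normal_{ji} = -\normal_{ij}$, so the required symmetry reduces to the property $\lambda_{\text{max}}^+(\statevector_i^n, \statevector_j^n, \normal_{ij}) = \lambda_{\text{max}}^+(\statevector_j^n, \statevector_i^n, -\normal_{ij})$ of the wavespeed estimator; this holds because exchanging the left and right states together with reversing the normal yields the very same projected Riemann problem \eqref{eq:riemann_problem}, and hence the same maximal wavespeed. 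With $d_{ij}^{\low,n} = d_{ji}^{\low,n}$ secured, the three sign changes combine to give $\bv{F}_{ji}^{\low} = -\bv{F}_{ij}^{\low}$, which completes the argument.
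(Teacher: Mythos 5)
Your proof is correct and follows exactly the route the paper intends: the lemma's own phrase ``using property \eqref{PartitionBetter}'' signals that the equivalence comes from adding the vanishing term $\dt_n\,\flux(\statevector_i^n)\bigl(\sum_{j \in \mathcal{I}(i)}\bv{c}_{ij}+\bv{c}_i^{\bdry}\bigr)=\bzero$, which is precisely your argument, with skew-symmetry then following from Lemma~\ref{PropSkew}. Your explicit verification that $d_{ij}^{\low,n}=d_{ji}^{\low,n}$, via invariance of the projected Riemann problem under exchanging the two states and reversing the normal, is a detail the paper leaves implicit, and you handle it correctly.
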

Summing up \eqref{skewSymmetric} over the index $i$ and using the skew
symmetry of $\bv{F}_{ij}^{\low}$ leads to the following corollary.
\begin{corollary}[Total balance]
  \label{TotBalLemma}
  Scheme \eqref{cijdijscheme} satisfies the balance equation
  \begin{align}
    \label{totalBalance}
    \sum_{i \in \vertices} m_i \frac{\statevector_i^{n+1} -
    \statevector_i^{n}}{\dt_n} + \sum_{i \in \mathcal{I}(\bdry)}
    \bv{F}_{i}^{\bdry,\low} = 0
  \end{align}
  for all $i \in \vertices$. Using definitions \eqref{eq:mass_matrices} and
  \eqref{eq:cij_def2} one can write \eqref{totalBalance} equivalently as
  \begin{multline*}
    \int_\Omega \state_h^{n+1}(\xcoord) \dx
    \;+\;
    \frac{\dt_n}{2}\,\int_{\partial\Omega}
    \sum_{i\in\mathcal{I}(\bdry)}\HypBasisScal_i
    \big(\flux(\statevector_i^{\bdry,n}) + \flux(\statevector_i^{n})\big)
    \,\normal_\element\ds
    \\
    \;-\;
    \sum_{i\in\mathcal{I}(\bdry)} \loworderdi \big(\statevector_i^{\bdry,n}
    - \statevector_i^n\big)
    \;=\;
    \int_\Omega \state_h^{n}(\xcoord) \dx.
  \end{multline*}
\end{corollary}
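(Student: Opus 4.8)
The plan is to follow the hint preceding the statement and sum the skew-symmetric form \eqref{skewSymmetric} of Lemma~\ref{LemSkew} over all degrees of freedom $i\in\vertices$. This splits the left-hand side into three pieces: the mass contribution $\sum_{i\in\vertices} m_i(\statevector_i^{\low,n+1}-\statevector_i^n)$, the interior double sum $\sum_{i\in\vertices}\sum_{j\in\mathcal{I}(i)}\bv{F}_{ij}^{\low}$, and the boundary contribution $\sum_{i\in\vertices}\bv{F}_{i}^{\bdry,\low}$. The goal is to show that the interior double sum drops out entirely, leaving only the mass and boundary terms.

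The key step---and the only one requiring care---is the cancellation of the interior double sum. First I would note that the stencil relation is symmetric: by Lemma~\ref{PropSkew} we have $\bv{c}_{ij}=-\bv{c}_{ji}$, hence $\bv{c}_{ij}\neq\bzero$ if and only if $\bv{c}_{ji}\neq\bzero$, \ie\ $j\in\mathcal{I}(i)\iff i\in\mathcal{I}(j)$. Consequently the double sum ranges over a set of index pairs that is invariant under swapping $i$ and $j$, and since Lemma~\ref{LemSkew} guarantees $\bv{F}_{ij}^{\low}=-\bv{F}_{ji}^{\low}$, rewriting the sum symmetrically gives $\sum_{i}\sum_{j\in\mathcal{I}(i)}\bv{F}_{ij}^{\low}=\tfrac12\sum_{i}\sum_{j\in\mathcal{I}(i)}\big(\bv{F}_{ij}^{\low}+\bv{F}_{ji}^{\low}\big)=\bzero$. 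The diagonal terms pose no problem, as $\bv{F}_{ii}^{\low}=\bzero$ because $\bv{c}_{ii}=\bzero$ and $\statevector_i^n-\statevector_i^n=\bzero$.

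Next I would dispose of the degrees of freedom away from the boundary: for $i\in\vertices\setminus\mathcal{I}(\bdry)$ one has $\bv{c}_{i}^{\bdry}=\bzero$ and $\loworderdi=0$, so $\bv{F}_{i}^{\bdry,\low}=\bzero$ and the boundary sum collapses to a sum over $\mathcal{I}(\bdry)$. Dividing through by $\dt_n$ and rearranging then yields the balance \eqref{totalBalance}.

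Finally, the equivalent integral form is obtained by substituting the definitions \eqref{eq:mass_matrices} and \eqref{eq:cij_def2} back in. Using $m_i=\int_{\element_i}\HypBasisScal_i\dx$ together with the nodal expansion $\state_h^n=\sum_{i}\statevector_i^n\HypBasisScal_i$ and linearity of the integral, the mass terms assemble into $\sum_{i}m_i\statevector_i^{n}=\int_\Omega\state_h^n\dx$ (and likewise at $t_{n+1}$). Inserting $\bv{c}_{i}^{\bdry}=\tfrac12\int_{\partial\element_i\cap\bdry}\HypBasisScal_i\normal_{\element}\ds$ collects the convective boundary fluxes into the single surface integral over $\bdry$, while the viscous boundary terms assemble into the stated sum over $\mathcal{I}(\bdry)$. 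I expect no real obstacle in this last paragraph beyond bookkeeping; the entire content of the corollary rests on the skew-symmetry cancellation above, which is precisely where the symmetry of the stencil induced by $\bv{c}_{ij}$ is essential.
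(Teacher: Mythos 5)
Your proof is correct and takes essentially the same route as the paper: the corollary is obtained there by summing the skew-symmetric form \eqref{skewSymmetric} over $i\in\vertices$ and invoking the skew symmetry $\bv{F}_{ij}^{\low}=-\bv{F}_{ji}^{\low}$, which is precisely your cancellation argument. The details you supply---symmetry of the stencil via Lemma~\ref{PropSkew}, vanishing of the diagonal term, collapse of the boundary sum to $\mathcal{I}(\bdry)$, and the substitution of \eqref{eq:mass_matrices} and \eqref{eq:cij_def2} for the integral form---are exactly the bookkeeping the paper leaves implicit.
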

\begin{remark}
  $\bv{F}_{i}^{\bdry,\low}$ can be viewed as a central flux between the
  boundary state $\statevector_i^{n}$ and a \emph{ghost node} with state
  $\statevector_i^{\bdry,n}$ in the usual discontinuous Galerkin framework.
\end{remark}

\subsection{Invariant-set preservation}
\label{sec:low_order_stability}
We now focus on stability properties of the low-order scheme. In the spirit
of \cite{GuerPop2016, DiscIndep2019}, we rewrite \eqref{cijdijscheme} as a
convex combination of \emph{bar states}. These are states formed by an
algebraic combination of interacting degrees of freedom that, under a
suitable CFL condition, correspond to the spatial average of an associated
one dimensional Riemann problem. However, in contrast to the discussion
found in the references above, we end up with an additional set of bar
states that depend on the boundary data. We start with the following
algebraic identity.

\begin{lemma}[Convex reformulation]
  \label{ConveRef}
  The update procedure \eqref{cijdijscheme} can equivalently be written as
  follows:
  \begin{multline}
    \label{ConvexRef}
    \statevector_i^{\low,n+1} = \left(1 +\frac{2 \dt_n d_{ii}^{\low,n}}{m_i}\right)
    \statevector_{i}^{n}
    + \frac{2 \dt_n \loworderdi}{m_i} \overline{\statevector}_{i}^{\bdry,n}
    + \sum_{j \in \mathcal{I}(i)\backslash\{i\}} \frac{2
    \dt_n d_{ij}^{\low,n}}{m_i} \overline{\statevector}_{ij}^{n},
  \end{multline}
  where $\overline{\statevector}_{ij}^{n}$ and
  $\overline{\statevector}_{i}^{\bdry,n}$
  are the bar states defined by,
  \begin{align}
    \label{UsualBarState}
    \overline{\statevector}_{ij}^{n} &= \frac{1}{2}(\statevector_j^{n} +
    \statevector_i^{n})
    - \frac{|\bv{c}_{ij}|}{2 d_{ij}^\low} \left(\flux(\statevector_j^{n}) -
    \flux(\statevector_i^{n})\right) \normal_{ij}, \\
    \label{BoundaryBarState}
    \overline{\statevector}_{i}^{\bdry,n} &=
    \frac{1}{2}(\statevector_i^{\bdry,n} + \statevector_i^{n})
    - \frac{|\bv{c}_i^{\bdry}|}{2 \loworderdi}
    \left(\flux(\statevector_i^{\bdry,n}) -
    \flux(\statevector_i^{n})\right)
    \normal_{i}.
  \end{align}
Note that $\overline{\statevector}_{i}^{\bdry,n}$ depends on the boundary data
$\statevector_i^{\bdry,n}$.
\end{lemma}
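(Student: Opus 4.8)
The plan is to solve the update \eqref{cijdijscheme} for $\statevector_i^{\low,n+1}$ and then recognize the right-hand side as the affine combination \eqref{ConvexRef} of $\statevector_i^n$ and the bar states. First I would multiply \eqref{cijdijscheme} by $\dt_n/m_i$ and isolate the updated state,
\begin{align*}
  \statevector_i^{\low,n+1} = \statevector_i^n - \frac{\dt_n}{m_i}\Big[\sum_{j\in\mathcal{I}(i)}\big(\flux(\statevector_j^n)\bv{c}_{ij} - d_{ij}^{\low,n}(\statevector_j^n-\statevector_i^n)\big) + \flux(\statevector_i^{\bdry,n})\bv{c}_i^{\bdry} - \loworderdi(\statevector_i^{\bdry,n}-\statevector_i^n)\Big].
\end{align*}

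The crucial device is the partition-of-unity identity \eqref{PartitionBetter}: since $\sum_{j\in\mathcal{I}(i)}\bv{c}_{ij}+\bv{c}_i^{\bdry}=\bzero$, the vector $\flux(\statevector_i^n)\big(\sum_j\bv{c}_{ij}+\bv{c}_i^{\bdry}\big)$ vanishes and may be subtracted freely. This converts every absolute flux term $\flux(\statevector_j^n)\bv{c}_{ij}$ into a flux \emph{difference} $(\flux(\statevector_j^n)-\flux(\statevector_i^n))\bv{c}_{ij}$, and likewise rewrites the boundary contribution in terms of $\flux(\statevector_i^{\bdry,n})-\flux(\statevector_i^n)$.

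Next, for each $j\neq i$ I would group the flux-difference and viscous contributions. Writing $\bv{c}_{ij}=|\bv{c}_{ij}|\normal_{ij}$ and comparing with the bar-state definition \eqref{UsualBarState} yields the algebraic identity
\begin{align*}
  (\flux(\statevector_j^n)-\flux(\statevector_i^n))\bv{c}_{ij} - d_{ij}^{\low,n}(\statevector_j^n-\statevector_i^n) = -2d_{ij}^{\low,n}\big(\overline{\statevector}_{ij}^n-\statevector_i^n\big),
\end{align*}
together with its analogue for the boundary term using \eqref{BoundaryBarState}. The $j=i$ summand drops out since $\bv{c}_{ii}=\bzero$. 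Substituting these identities and absorbing the resulting minus signs into the leading factor $-\dt_n/m_i$ produces the coefficients $2\dt_n d_{ij}^{\low,n}/m_i$ and $2\dt_n\loworderdi/m_i$ multiplying $\overline{\statevector}_{ij}^n$ and $\overline{\statevector}_i^{\bdry,n}$, respectively.

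Finally, I would collect the coefficient of $\statevector_i^n$. The accumulated $-\statevector_i^n$ contributions sum to $-\sum_{j\neq i}d_{ij}^{\low,n}-\loworderdi$, which is precisely $d_{ii}^{\low,n}$ by definition \eqref{eq:dij_low_order_supplement}; this furnishes the prefactor $1+2\dt_n d_{ii}^{\low,n}/m_i$ and establishes \eqref{ConvexRef}. The argument is a pure algebraic rearrangement and presents no genuine obstacle; the only point demanding care is the bookkeeping that ties the partition-of-unity identity to the definition of $d_{ii}^{\low,n}$, so that the $\statevector_i^n$ terms assemble into the single stated prefactor rather than remaining scattered across the stencil sum.
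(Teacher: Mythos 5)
Your proof is correct and follows essentially the same route as the paper: both use the partition-of-unity identity \eqref{PartitionBetter} to convert absolute fluxes into flux differences, then recognize the bar states, and finally collect the coefficient of $\statevector_i^n$ via the definition \eqref{eq:dij_low_order_supplement} of $d_{ii}^{\low,n}$. The only difference is bookkeeping---you package the regrouping as the identity $(\flux(\statevector_j^n)-\flux(\statevector_i^n))\bv{c}_{ij} - d_{ij}^{\low,n}(\statevector_j^n-\statevector_i^n) = -2d_{ij}^{\low,n}(\overline{\statevector}_{ij}^n-\statevector_i^n)$, while the paper adds and subtracts the corresponding viscous terms---which is a purely cosmetic distinction.
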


\begin{proof}
  Similarly to the derivation of the skew symmetric form
  \eqref{skewSymmetric}, we can use identity \eqref{PartitionBetter} to
  rewrite \eqref{cijdijscheme} as follows:
  \begin{multline*}
    m_i \statevector_i^{n+1}
    = m_i \statevector_i^n
    - \dt_n \sum_{j \in \mathcal{I}(i)} \big(\flux(\statevector_j^n)-
    \flux(\statevector_i^{n})\big) \bv{c}_{ij} - d_{ij}^{\low,n}
    (\statevector_j^n - \statevector_i^n) \\
    - \dt_n \big(\flux(\statevector_i^{\bdry,n}) -
    \flux(\statevector_i^{n})\big)\bv{c}_{i}^{\bdry} - \loworderdi
    (\statevector_i^{\bdry,n} - \statevector_i^n).
  \end{multline*}
  We now add and subtract $2 \dt_n \sum_{j \in
  \mathcal{I}(i)\backslash \{i\}} d_{ij}^{\low,n} \statevector_i^n$ and $2
  \dt_n \loworderdi \statevector_i^{\bdry,n}$:
  \begin{multline*}
    m_i \statevector_i^{n+1}
    \;=\; \big(m_i - 2 \dt_n \loworderdi - \sum_{j \in
    \mathcal{I}(i)\backslash \{i\}} 2 \dt_n d_{ij}^{\low,n}\big)
    \,\statevector_i^n
    \\
    - \dt_n \sum_{j \in \mathcal{I}(i)\backslash \{i\}}
    \big(\flux(\statevector_j^n)- \flux(\statevector_i^{n})\big)
    \bv{c}_{ij} - d_{ij}^{\low,n} (\statevector_j^n + \statevector_i^n)
    \\
    - \dt_n \big(\flux(\statevector_i^{\bdry,n}) -
    \flux(\statevector_i^{n})\big)\bv{c}_{i}^{\bdry} - \loworderdi
    (\statevector_i^{\bdry,n} + \statevector_i^n).
  \end{multline*}
  The result now follows readily dividing both sides of the equality by $m_i$
  and using definition \eqref{eq:dij_low_order_supplement}.
\end{proof}

\begin{lemma}[Pointwise entropy inequality]
  \label{EntStabLemma}
  Let $\{\eta, \eflux\}$ be \emph{any} entropy-flux pair of the hyperbolic
  system $\partial_t \state + \diver{}\flux(\state) =
  \bzero$~\cite{Viscous2014, GuerPop2016}. Assume that the update
  \eqref{cijdijscheme} is performed with a time step size $\dt_n$
  satisfying the following CFL condition:
  \begin{align}
    \label{CFLcond}
    - \dt_n \, \frac{2\,d_{ii}^{\low,n}}{m_i} \leq 1
    \quad \text{for all } i \in \vertices.
  \end{align}
  Then the update $\statevector_i^{n+1}$ satisfies the following
  pointwise entropy inequality:
  \begin{multline}
    \label{EntropyIneq}
    m_i \frac{\eta(\statevector_i^{n+1}) - \eta(\statevector_i^{n})}{\dt_n}
    + \sum_{j \in \mathcal{I}(i)} \eflux(\statevector_j^{n}) \cdot \bv{c}_{ij}
    - d_{ij}^{\low,n} (\eta(\statevector_j^{n}) - \eta(\statevector_i^{n}))
    \\
    + \eflux(\statevector_i^{\bdry,n}) \cdot \bv{c}_i^{\bdry} - \loworderdi
    (\eta(\statevector_i^{\bdry,n}) - \eta(\statevector_i^{n})) \leq 0
  \end{multline}
  for all $i \in \vertices$.
\end{lemma}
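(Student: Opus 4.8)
The plan is to derive \eqref{EntropyIneq} from the convex reformulation of Lemma~\ref{ConveRef} together with an entropy inequality satisfied by each bar state. First I would observe that the CFL condition \eqref{CFLcond} makes the coefficient $1 + 2\dt_n d_{ii}^{\low,n}/m_i$ of $\statevector_i^n$ in \eqref{ConvexRef} nonnegative; since by \eqref{eq:dij_low_order_supplement} the remaining coefficients $2\dt_n\loworderdi/m_i$ and $2\dt_n d_{ij}^{\low,n}/m_i$ are nonnegative and, using \eqref{eq:dij_low_order_supplement} again, all coefficients sum to one, \eqref{ConvexRef} expresses $\statevector_i^{\low,n+1}$ as a genuine convex combination of $\statevector_i^n$, the boundary bar state $\overline{\statevector}_{i}^{\bdry,n}$, and the interior bar states $\overline{\statevector}_{ij}^{n}$. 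Because every entropy $\eta$ appearing in Assumption~\ref{ass:entropy_inequality} is necessarily convex (convexity of $\eta$ is what renders $\{\eta,\eflux\}$ compatible with the parabolic regularization), Jensen's inequality applied to \eqref{ConvexRef} yields
\[
  \eta(\statevector_i^{\low,n+1}) \le \Big(1 + \tfrac{2\dt_n d_{ii}^{\low,n}}{m_i}\Big)\eta(\statevector_i^n) + \tfrac{2\dt_n\loworderdi}{m_i}\,\eta(\overline{\statevector}_{i}^{\bdry,n}) + \sum_{j\in\mathcal{I}(i)\setminus\{i\}} \tfrac{2\dt_n d_{ij}^{\low,n}}{m_i}\,\eta(\overline{\statevector}_{ij}^{n}).
\]

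The heart of the argument is the bar-state entropy inequality. I would show, in the spirit of \cite{GuerPop2016}, that
\[
  \eta(\overline{\statevector}_{ij}^{n}) \le \tfrac12\big(\eta(\statevector_i^n)+\eta(\statevector_j^n)\big) - \tfrac{|\bv{c}_{ij}|}{2 d_{ij}^{\low,n}}\big(\eflux(\statevector_j^n)-\eflux(\statevector_i^n)\big)\cdot\normal_{ij},
\]
and the analogous bound for $\overline{\statevector}_{i}^{\bdry,n}$ with $(\statevector_i^{\bdry,n},\bv{c}_i^{\bdry},\loworderdi,\normal_i)$ in place of $(\statevector_j^n,\bv{c}_{ij},d_{ij}^{\low,n},\normal_{ij})$. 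To this end I recognize that $\overline{\statevector}_{ij}^{n}$ is exactly the spatial average $\int_{-1/2}^{1/2}\state(x,t)\,\mathrm{d}x$ of the entropy solution of the projected Riemann problem \eqref{eq:riemann_problem} with states $\statevector_i^n,\statevector_j^n$ in direction $\normal_{ij}$, evaluated at $t = |\bv{c}_{ij}|/(2 d_{ij}^{\low,n})$: integrating the conservation law over $[-1/2,1/2]$ in space and $[0,t]$ in time, and using that $\state(\pm\tfrac12,s)$ equals $\statevector_i^n$ and $\statevector_j^n$, reproduces \eqref{UsualBarState}. The choice \eqref{eq:dij_low_order}, which enforces $d_{ij}^{\low,n}\ge|\bv{c}_{ij}|\lambda_{\max}$, guarantees $t\lambda_{\max}\le\tfrac12$, so Assumption~\ref{ass:propagation_speed} applies and the waves never reach $x=\pm\tfrac12$. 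Integrating the entropy dissipation inequality of Assumption~\ref{ass:entropy_inequality} over the same space–time rectangle and applying Jensen once more, $\eta(\overline{\statevector}_{ij}^{n})\le\int_{-1/2}^{1/2}\eta(\state(x,t))\,\mathrm{d}x$, gives the claimed bound; the boundary bar state is treated identically, the admissibility $\statevector_i^{\bdry,n}\in\mathcal{A}$ ensuring the associated Riemann problem is well posed.

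For the assembly I would substitute the two bar-state inequalities into the Jensen bound and multiply by $m_i/\dt_n$. The factor $\tfrac{2\dt_n d_{ij}^{\low,n}}{m_i}\cdot\tfrac{|\bv{c}_{ij}|}{2 d_{ij}^{\low,n}}$ collapses to $\tfrac{\dt_n}{m_i}|\bv{c}_{ij}|$, and $|\bv{c}_{ij}|\normal_{ij}=\bv{c}_{ij}$, producing the flux terms $\eflux(\statevector_j^n)\cdot\bv{c}_{ij}$ and $\eflux(\statevector_i^{\bdry,n})\cdot\bv{c}_i^{\bdry}$ up to the constant shift $\eflux(\statevector_i^n)\cdot\big(\sum_{j\in\mathcal{I}(i)}\bv{c}_{ij}+\bv{c}_i^{\bdry}\big)$, which vanishes by the partition-of-unity identity \eqref{PartitionBetter}. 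Collecting the coefficients of $\eta(\statevector_i^n)$, $\eta(\statevector_j^n)$, and $\eta(\statevector_i^{\bdry,n})$ and using \eqref{eq:dij_low_order_supplement} to eliminate $d_{ii}^{\low,n}$ reproduces exactly the viscous entropy contributions $-d_{ij}^{\low,n}(\eta(\statevector_j^n)-\eta(\statevector_i^n))$ and $-\loworderdi(\eta(\statevector_i^{\bdry,n})-\eta(\statevector_i^n))$; restoring the trivially vanishing $j=i$ term (using $\bv{c}_{ii}=\bzero$) yields precisely \eqref{EntropyIneq}.

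I expect the main obstacle to be the rigorous justification of the bar-state entropy inequality rather than the bookkeeping: one must argue that the purely algebraic average \eqref{UsualBarState} genuinely coincides with the averaged entropy solution of \eqref{eq:riemann_problem}, that the guaranteed upper bound $\lambda_{\max}^+$ in \eqref{eq:dij_low_order} really enforces $t\lambda_{\max}\le\tfrac12$, and that the chosen $\eta$ is convex so that both invocations of Jensen are legitimate. The boundary term requires the extra observation that treating $\statevector_i^{\bdry,n}$ as a ghost state poses a standard Riemann problem to which the same averaging argument applies; everything remaining is algebraic simplification driven by \eqref{PartitionBetter} and \eqref{eq:dij_low_order_supplement}.
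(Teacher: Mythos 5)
Your proposal is correct and follows essentially the same route as the paper's proof: the convex reformulation \eqref{ConvexRef} under the CFL condition \eqref{CFLcond}, convexity of $\eta$, and the two bar-state entropy inequalities, which the paper justifies exactly as you do---by identifying \eqref{UsualBarState} and \eqref{BoundaryBarState} with Riemann-fan averages---except that it delegates this step and the final algebraic assembly to \cite[Thm.\,4.7]{GuerPop2016} and \cite[Thm.\,3.8]{DiscIndep2019} rather than writing them out. Your version simply makes explicit what the paper cites, including the use of \eqref{PartitionBetter} and \eqref{eq:dij_low_order_supplement} in the bookkeeping.
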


\begin{proof}
  Proving inequality \eqref{EntropyIneq} relies on the convex reformulation
  \eqref{ConvexRef}, the convexity of the entropy $\eta$ and corresponding
  inequalities that hold true for the bar states defined in
  \eqref{UsualBarState} and \eqref{BoundaryBarState}:
  \begin{align*}
    \eta(\overline{\statevector}_{ij}^{n}) &\leq
    \frac{1}{2}\big(\eta(\statevector_i^n) + \eta(\statevector_j^n)\big)
    - \frac{|\bv{c}_{ij}|_{\ell^2}}{2 d_{ij}^\low}
    \big(\eflux(\statevector_j^n) - \eflux(\statevector_i^n)\big)\normal_{ij},
    \\
    \eta(\overline{\statevector}_{i}^{\bdry,n}) &\leq \frac{1}{2}
    \big(\eta(\statevector_i^{\bdry,n}) + \eta(\statevector_i^n)\big) -
    \frac{|\bv{c}^{\bdry}_{i}|_{\ell^2}}{2 d_{i}^{\bdry,n}}
    \big(\eflux(\statevector_i^{\bdry,n}) -
    \eflux(\statevector_i^n)\big)\normal_{i}.
  \end{align*}
  Such inequalities hold true provided that $d_{ij}^{\low,n}$ and
  $d_{i}^{\bdry,n}$ are chosen large enough so that the bar states represent
  an \emph{average value} over the \emph{Riemann fan}
  \cite{GuerPop2016}. In particular, the choice \eqref{eq:dij_low_order} is
  sufficient \cite{GuerPop2016}. For further details we refer to the
  detailed discussion found in \cite[Thm.\,4.7]{GuerPop2016} and
  \cite[Thm.\,3.8]{DiscIndep2019}.
\end{proof}

\begin{remark}[Global entropy inequality]
  Similarly to the procedure in Section~\ref{sec:low_order_conservation}
  that establishes global conservation (see Lemma~\ref{LemSkew} and
  Corollary~\ref{TotBalLemma}), we can rewrite inequality
  \eqref{EntropyIneq} in skew symmetric form and sum up. This leads to a
  global entropy inequality:
  \begin{align}
    \label{EntropyIneqGlobal}
    \sum_{i \in \vertices} m_i \eta(\statevector_i^{n+1}) + \dt_n
    \sum_{i \in \mathcal{I}(\bdry)} \bv{Q}_{i}^{\bdry,\low}
    \;\le\;
    \sum_{i \in \vertices} m_i \eta(\statevector_i^{n}),
  \end{align}
  with (viscous) boundary fluxes
  \begin{align*}
    \bv{Q}_{i}^{\bdry,\low} :=
    \left(\eflux(\statevector_i^{\bdry,n}) +
    \eflux(\statevector_i^{n})\right)\cdot\bv{c}_i^{\bdry} - \loworderdi
    \left(\eta(\statevector_i^{\bdry,n}) - \eta(\statevector_i^{n})\right).
  \end{align*}
  This is nothing else than the discrete counterpart of a global entropy
  inequality satisfied by the entropy-flux pair $\{\eta, \eflux\}$.
\end{remark}

\begin{remark}[Time step size restriction]
  CFL condition \eqref{CFLcond} determines the largest time step size that
  can be used for an individual update step. For example, in practical
  implementations it is convenient to select a (user specified) constant
  $0<\text{Cr}\leq1$ and then compute a time step size as follows:
  \begin{align}\label{MaxCFLdt}
    \dt_n = \,\text{Cr} \cdot \min_{i \in \vertices}
    \Big(- \frac{m_i}{2\,d_{ii}^{\low,n}}\Big).
  \end{align}
  Here, $m_i$ decreases and $d_{ii}^{L,n}$ grows with increasing polynomial
  degree $k$ leading to a stricter time step size restriction.
\end{remark}

\begin{lemma}[Invariant set property]
  \label{InvariantProp}
  Under the stated CFL condition \eqref{CFLcond} and assuming that the
  provided boundary data $\statevector_i^{\bdry,n}$ is in the invariant set
  $\mathcal{A}$ for all $i \in \mathcal{I}(\bdry)$, then the update
  $\statevector_i^{n+1}$ computed by \eqref{cijdijscheme} and
  \eqref{eq:dij_low_order} will satisfy $\statevector_i^{n+1} \in
  \mathcal{A}$ as well.
\end{lemma}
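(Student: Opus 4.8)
The plan is to read the update $\statevector_i^{n+1}$ through the convex reformulation of Lemma~\ref{ConveRef} and to exhibit it as a genuine convex combination of states that each lie in $\mathcal{A}$; since $\mathcal{A}$ is convex (Assumption~\ref{ass:admissible_set}), the membership $\statevector_i^{n+1}\in\mathcal{A}$ then follows at once. Abbreviating the coefficients appearing in \eqref{ConvexRef} by $\mu_{ii}:=1+\tfrac{2\dt_n d_{ii}^{\low,n}}{m_i}$, $\mu_i^{\bdry}:=\tfrac{2\dt_n\loworderdi}{m_i}$ and $\mu_{ij}:=\tfrac{2\dt_n d_{ij}^{\low,n}}{m_i}$ for $j\neq i$, the first task is to verify that these weights are non-negative and sum to one.

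For non-negativity, I would note that $\mu_i^{\bdry}$ and the $\mu_{ij}$ with $j\neq i$ are non-negative since $\dt_n,m_i>0$ and the graph viscosities in \eqref{eq:dij_low_order} are non-negative; the only delicate weight is $\mu_{ii}$, and because $d_{ii}^{\low,n}<0$ by \eqref{eq:dij_low_order_supplement}, its non-negativity is exactly the CFL condition \eqref{CFLcond}. For the normalization I would insert the definition \eqref{eq:dij_low_order_supplement} of $d_{ii}^{\low,n}=-\loworderdi-\sum_{j\neq i}d_{ij}^{\low,n}$ into $\mu_{ii}+\mu_i^{\bdry}+\sum_{j\neq i}\mu_{ij}$, whereupon the viscous contributions telescope to zero and leave the sum equal to one.

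The crux of the argument, and the step that will require the most care, is to show that each bar state lies in $\mathcal{A}$. I would identify $\overline{\statevector}_{ij}^n$ from \eqref{UsualBarState} with the spatial average $\int_{-1/2}^{1/2}\state(x,t)\,\text{d}x$ of the projected Riemann problem \eqref{eq:riemann_problem} posed with left/right data $\statevector_i^n,\statevector_j^n$ and normal $\normal_{ij}$, read off at the pseudo-time $t=\tfrac{|\bv{c}_{ij}|_{\ell^2}}{2 d_{ij}^{\low,n}}$. Integrating \eqref{eq:riemann_problem} over space and time and using the finite-speed property (Assumption~\ref{ass:propagation_speed}) reproduces precisely the closed form $\tfrac12(\statevector_i^n+\statevector_j^n)-t\big(\flux(\statevector_j^n)-\flux(\statevector_i^n)\big)\normal_{ij}$ of \eqref{UsualBarState}. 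The choice $d_{ij}^{\low,n}=|\bv{c}_{ij}|_{\ell^2}\lambda_{\max}^+$ in \eqref{eq:dij_low_order} is designed so that $t\,\lambda_{\max}^+=\tfrac12$ and hence $t\,\lambda_{\max}\leq\tfrac12$, which is exactly the hypothesis of Assumption~\ref{ass:invariant_set} with the admissible set itself playing the role of an invariant set, $\mathcal{B}=\mathcal{A}$; as $\statevector_i^n,\statevector_j^n\in\mathcal{A}$, this yields $\overline{\statevector}_{ij}^n\in\mathcal{A}$. The boundary bar state $\overline{\statevector}_i^{\bdry,n}$ from \eqref{BoundaryBarState} is treated identically, now as the Riemann average between $\statevector_i^n$ and the ghost data $\statevector_i^{\bdry,n}$ with normal $\normal_i$ and pseudo-time $\tfrac{|\bv{c}_i^{\bdry}|_{\ell^2}}{2\loworderdi}$; here the hypothesis $\statevector_i^{\bdry,n}\in\mathcal{A}$ is exactly what places both data of this Riemann problem in $\mathcal{A}$.

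With all three constituent points $\statevector_i^n$, $\overline{\statevector}_i^{\bdry,n}$ and $\overline{\statevector}_{ij}^n$ in $\mathcal{A}$ and the weights $\mu_{ii},\mu_i^{\bdry},\mu_{ij}\geq0$ summing to one, convexity of $\mathcal{A}$ forces the convex combination $\statevector_i^{n+1}$ into $\mathcal{A}$, which finishes the proof. The only genuinely nontrivial ingredient is the Riemann-average identification of the bar states together with the local CFL bookkeeping $t\,\lambda_{\max}\le\tfrac12$; this is the content carried over from \cite{GuerPop2016, DiscIndep2019}, so in the write-up I would invoke it by reference rather than reprove it, and emphasize only the new boundary bar state, whose admissibility rests squarely on the assumption $\statevector_i^{\bdry,n}\in\mathcal{A}$.
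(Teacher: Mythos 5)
Your proposal is correct and follows essentially the same route as the paper's proof: read $\statevector_i^{n+1}$ through the convex reformulation \eqref{ConvexRef}, note that the CFL condition \eqref{CFLcond} together with \eqref{eq:dij_low_order_supplement} makes the coefficients a genuine convex combination, and conclude from the bar states lying in $\mathcal{A}$ plus convexity of $\mathcal{A}$. The only difference is one of presentation: you re-derive the Riemann-average identification of the bar states (via Assumptions~\ref{ass:propagation_speed} and~\ref{ass:invariant_set}), whereas the paper delegates exactly that ingredient to Lemma~\ref{EntStabLemma} and the cited results of \cite{GuerPop2016, DiscIndep2019}.
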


\begin{proof}
  The statement is a direct consequence of the fact that \eqref{ConvexRef}
  expresses $\statevector_i^{n+1}$ as a convex combination of bar states,
  that in turn are located in the invariant set $\mathcal{A}$ provided that
  the CFL condition \eqref{CFLcond} holds; see Lemma~\ref{EntStabLemma}.
\end{proof}

\section{Boundary conditions}
\label{sec:boundary_conditions}
We now discuss how to construct the boundary data vector
$\statevector_i^{\bdry,n} \in \mathbb{R}^m$, $i \in \mathcal{I}(\bdry)$ for
different types of boundary conditions.

The construction of the boundary data vector $\statevector_i^{\bdry,n}$
follows well established procedures, we refer to e.\,g.,~\cite{
Hedstrom_1979, Demkowicz_etal_1990, Bassi1997, Hartmann2002, Hesthaven2008}. For the
sake of completeness we briefly summarize our approach based on
\cite{Guermond2022}.

\subsection{Construction of boundary data $\statevector_i^{\bdry,n}$}

We distinguish Dirichlet boundary conditions, slip boundary conditions,
supersonic and subsonic in- and outflow.

\paragraph{Dirichlet boundary conditions}
For Dirichlet boundaries we simply set $\statevector_i^{\bdry,n} \in
\mathcal{A}$ to the desired boundary data at position $\xcoord_i$ for time
$t_n$.

\paragraph{Slip boundary conditions}
We impose slip boundary conditions for a boundary state $\statevector_i^{n}
= [\rho_i^n, \mom_i^{n}, \totme_i^n]^\transp$ at a boundary collocation
point $\xcoord_i$ by setting
\begin{align}
  \label{eq:slip_boundary}
  \statevector_i^{\bdry,n} := [\rho_i^n, \mom_i^{\bdry,n},
  \totme_i^n]^\transp,
  \quad\text{where}\quad
  \mom_i^{\bdry,n} := \mom_i^n - 2 (\mom_i^n\cdot\normal_i) \normal_i,
\end{align}
and where we recall the definition $\normal_i = \bv{c}_{i}^{\bdry} /
|\bv{c}_{i}^{\bdry}|_{\ell^2}$ for a boundary collocation point
$\xcoord_i$. This implies that $\mom_i^{\partial,n}$ and $\mom_i^n$ have
opposite normal components but the same tangential projection with respect
to the normal $\normal_i$. The boundary flux $\bv{F}_{i}^{\bdry,\low}$
consequently only affects the balance of the normal component of the
momentum and leaves all other components unaffected.

\paragraph{Supersonic in- and outflow}
In order to impose supersonic in- and outflow at portions of the boundary, we
proceed as follows. Given a state $\statevector_i^{n} = [\rho_i^n,
\mom_i^{n}, \totme_i^n]^\transp$ at a boundary collocation point
$\xcoord_i$ with velocity $\vel_i^n$, local speed of sound
$a_i(\statevector_i^{n})$, and corresponding Dirichlet data
$\statevector_i^{\text{d},n}$ for the inflow, we set
\begin{align}
  \label{eq:supersonic_inoutflow}
  \statevector_i^{\bdry, n}
  \;=\;
  \begin{cases}
    \begin{aligned}
      &\statevector_i^{\text{d},n} &\quad&
      \text{if}\; \vel_i^n\cdot\normal_i < -a_i,
      \\[0.5em]
      &\statevector_i^{n} &\quad&
      \text{if}\; \vel_i^n\cdot\normal_i \ge a_i.
    \end{aligned}
  \end{cases}
\end{align}

\paragraph{Subsonic in-flow and outflow}
For the subsonic case, we need to distinguish in- and outgoing
characteristics. To impose conditions only on the ingoing ones, we
construct the boundary data vector $\statevector_i^{\bdry,
n}$ by blending together the current state
$\statevector_i^{n}$ and the given Dirichlet data
$\statevector_i^{\text{d},n}$~\cite{Hedstrom_1979, Demkowicz_etal_1990,
Guermond2022}. To this end, we briefly review the approach described
in \cite{Guermond2022} for the case of our discontinuous formulation.

Given a state $\statevector = [\rho, \mom, \totme]^\transp$ and a unit
vector $\normal$ we introduce the following set of characteristic variables
$\mathcal{R}_k(\statevector, \normal)$ and characteristic speeds
$\lambda_k(\statevector,\normal)$, $k=1,\ldots,4$:
\begin{align}
  \label{eq:characteristics}
  \begin{cases}
    \begin{aligned}
      \mathcal{R}(\statevector,\normal)\;:=\;\Big\{
        &v_{\normal} - \tfrac{2 a}{\gamma - 1}, &\;&
        \frac{p(\statevector)}{\rho^\gamma}, &\;&
        \vel - (\vel\cdot\normal) \normal, &\;&
        v_{\normal} + \tfrac{2 a}{\gamma - 1}
      &\Big\},
      \\[0.25em]
      \lambda(\statevector,\normal)\;:=\;\big\{
        &v_{\normal} - a, &\;&
        v_{\normal}, &\;&
        v_{\normal}, &\;&
        v_{\normal} + a
      &\big\}.
    \end{aligned}
  \end{cases}
\end{align}
Here, $v_{\normal} = \vel \cdot \normal$ and $a =
\sqrt{\gamma\frac{p}{\rho}}$ is the local speed of sound. The strategy now
consists of constructing the boundary data satisfying:
$\statevector_i^{\bdry, n}$ with $\mathcal{R}_i(\statevector_i^{\bdry,
n},\normal) = \mathcal{R}_i(\statevector_i^{\text{d},n},\normal)$ if
$\lambda_i(\statevector_i^{n}, \normal) \le 0$ (incoming characteristics), and
$\mathcal{R}_i(\statevector_i^{\bdry, n},\normal) =
\mathcal{R}_i(\statevector_i^{n},\normal)$ otherwise. Constructing an
admissible state $\statevector_i^{\bdry,n}$ satisfying such constraint is always
possible; see~\cite[\S\,4.3.2]{Guermond2022}. In addition, the resulting
state $\statevector_i^{\bdry, n}$ is always admissible provided
$\statevector_i^{\text{d}}$ is admissible, and the construction coincides
with \eqref{eq:supersonic_inoutflow} for the case of supersonic
in- and outflow.

\subsection{Multi-valued boundary conditions}
\label{rem:MValuedBC}
As opposed to classical implementations of discontinuous Galerkin schemes via
face integrals, the algebraic approach implies
that more than one boundary condition applies to a given
boundary collocation point $\xcoord_i$, for instance:
\begin{itemize}
  \item
    For a typical channel flow setup, a small subset of the boundary
    collocation points $\xcoord_i$ (those at corners) lie between the slip
    boundaries at the top and bottom as well as the inflow and outflow
    boundaries at the left and right, respectively.
  \item
    Another example is given by two parts of the boundary with slip
    boundary conditions that meet at an angle, say $90^\circ$. Here, slip
    boundary conditions for both normals should rather be enforced instead
    of a single slip boundary condition with a combined normal (of
    $45^\circ$).
\end{itemize}
In order to treat such boundary states, we first partition the boundary
$\partial\Omega$ into all $\mathcal{K}$ disjoint components $\partial\Omega_k$,
where we either apply a different boundary condition, or where portions of the
boundary meet with a large angle. Then we split $\bv{c}_i^{\bdry}$
accordingly:
\begin{align*}
  \partial\Omega = \bigcup_{k\in[1:\mathcal{K}]}\partial\Omega_k,
  \quad
  \bv{c}_i^{\bdry} = \sum_{k\in[1:\mathcal{K}]} \bv{c}_i^{\bdry,k},
  \quad\text{with}\quad
  \bv{c}_i^{\bdry,k} := \tfrac{1}{2} \int_{\partial K \cap
  \bdry_{k}} \HypBasisScal_i \normal_\element \ds.
\end{align*}
Finally, scheme \eqref{cijdijscheme} takes the form
\begin{multline}
  \label{cijdijschememodified}
  m_i \frac{\statevector_i^{\low, n+1} - \statevector_i^n}{\dt_n}
  + \sum_{j \in \mathcal{I}(i)}\big\{\flux(\statevector_j^n)\,\bv{c}_{ij} -
  d_{ij}^{\low,n} (\statevector_j^n - \statevector_i^n)\big\}
  \\
  + \sum_{k\in[1:\mathcal{K}]}\Big\{
  \flux(\statevector_i^{\bdry,k,n})\,\bv{c}_{i}^{\bdry,k} -
  d_{i}^{\bdry, k, n} (\statevector_i^{\bdry,k,n} - \statevector_i^n)
  \Big\}
  = \bzero, \quad\text{for }i\in\vertices,
\end{multline}
where $\statevector_i^{\bdry,k,n}$ is a modified vector of appropriately
chosen boundary data, and the modified graph viscosity $d_{i}^{\bdry, k, n}$
is computed as follows:
\begin{align}
  \label{eq:dij_low_order_modified}
  d_{i}^{\bdry, k, n}
  :=
  |\bv{c}_{i}^{\bdry,k}|_{\ell^2}\,
  \lambda_{\text{max}}^+(\statevector_i^{\bdry,k,n}, \statevector_i^{n},
  \normal_{i}^k),
  \quad \text{where } \quad
  \normal_{i}^k=
  \frac{\bv{c}_{i}^{\bdry,k}}{|\bv{c}_{i}^{\bdry,k}|_{\ell^2}}.
\end{align}


\section{High-order method and convex limiting}
\label{sec:highorder}

Following the same approach as discussed in \cite{Euler2018,DiscIndep2019},
we now introduce a formally high-order method by using the consistent mass
matrix and introducing a \emph{high-order graph-viscosity}
$d_{ij}^{\high}$,
\begin{multline}
  \label{HighScheme}
  \sum_{j \in \mathcal{I}(i)}
  m_{ij} \frac{\statevector_j^{\high, n+1} - \statevector_j^n}{\dt_n}
  + \sum_{j \in \mathcal{I}(i)}\big\{\flux(\statevector_j^n)\,\bv{c}_{ij} -
  d_{ij}^{\high,n} (\statevector_j^n - \statevector_i^n)\big\}
  \\
  + \flux(\statevector_i^{\bdry,n})\,\bv{c}_{i}^{\bdry} -
  \highorderbdrydi
  (\statevector_i^{\bdry,n} - \statevector_i^n) = \bzero,
  \quad\text{for }i\in\vertices.
\end{multline}
Here, $m_{ij} = \int_{\element} \HypBasisScal_i \HypBasisScal_j \dx$
denotes the consistent mass matrix. A considerable body of stabilization
techniques have been developed over the years, supplying ideas that could be
adapted to the computation of high-order graph viscosities $d_{ij}^{\high,n}$.
Among these methods we mention entropy-viscosity \cite{Guer2011}, smoothness
sensors \cite{Persson2006}, and semi-discrete entropy-stable flux constructions
\cite{Fjord2012, Fisher2013}; see also \cite{Michoski2016} for a comprehensive
review of approaches. All of these methods have in common that they try to
ensure that $d_{ij}^{\high,n} \approx d_{ij}^{\low,n}$ near shocks and
discontinuities, while forcing $d_{ij}^{\high,n} \approx 0$ in smooth regions of
the solution.

The development of high-order methods for discontinuous spatial
discretizations requires some attention to the minimal amount of viscosity.
Without enough viscosity between the element interfaces, the method might not
even be stable for smooth solutions. Therefore, we will
first present a high-order viscosity $d_{ij}^{\text{min},n}$ such that if
used in scheme \eqref{HighScheme}: (i) it results in a stable approximation
of smooth solutions on structured and unstructured meshes; and (ii) we
observe optimal convergence rates for smooth problems. We will call such a
viscosity $d_{ij}^{\text{min},n}$ a \emph{minimally stabilizing
viscosity}. This viscosity will not have any shock-capturing capability and
might not be best choice of viscosity for non-smooth problems. The sole
purpose of such a viscosity is to define a minimal amount of viscosity that
the method should always have.

On the other hand, we would like to adapt the entropy viscosity methodology
described in \cite{Guer2011, Guermond2014} to the context of graph-based
methods using discontinuous spatial discretizations. We will denote the
entropy viscosity as $d_{ij}^{\text{ev},n}$. Such viscosity should provide
the required shock capturing capabilities. Ultimately, we set
\begin{align}\label{HighOrderViscosity}
  d_{ij}^{\high,n} := \max \{
  d_{ij}^{\text{min},n},
  d_{ij}^{\text{ev},n} \}
\end{align}
to guarantee that the high-order scheme \eqref{HighScheme} possesses
enough viscosity to deliver stable solutions and optimal convergence rates in
the context of smooth solutions as well as shock-capturing capabilities in the
context of non-smooth problems.

In Section \ref{SubMinVisc} we define the minimally stabilizing viscosity
$d_{ij}^{\text{min}}$ while in Section \ref{SubMinEV} we describe the
entropy viscosity $d_{ij}^{\text{ev},n}$. In Section \ref{SubAlgFlux} we
describe the convex limiting procedure to ensure that the blended method
satisfies \emph{local} bounds at every collocation point.

\subsection{Minimally stabilizing high-order viscosity}
\label{SubMinVisc}

The usual Lax--Friedrichs flux of the form $\int_{F} \lambda [\![ u_h
]\!]\phi_i \ds$, where $\lambda$ is an estimate on the maximum wavespeed between
cell interfaces, leads to the usual optimal convergence rate $\|u -
u_h\|_{L^2(\Omega)} \leq \mathcal{O}(h^{k+\frac{1}{2}})$. However, it can
be observed experimentally that the behaviour of the Lax--Friedrichs flux is
suboptimal for the case of even polynomial degree in the $L^1$-norm. For
instance, for $k = 2$ when solving the isentropic vortex \cite{Zhang2010,
Hest2020, Chan2023} a rate of $\mathcal{O}(h^{2.75})$ can be observed
instead of the expected $\mathcal{O}(h^{3.0})$, see Remark
\ref{rem:asymptotic_regime}. The phenomenon of observing suboptimal
convergence rates for even polynomial degrees in the $L^1$ norm has been
well known for a while by practitioners but it is rarely commented on in
the literature. Following an argument in \cite{Zheng2023} we use a
different scaling for the cases of odd and even polynomial degrees in order
to define a minimally stabilizing high-order viscosity that recovers the
optimal rate $\mathcal{O}(h^{k+1})$ in the $L^1$-norm when $k$ is even.

\begin{remark}
\label{rem:asymptotic_regime}
We note that such a degradation in convergence rates typically manifests
only after a sufficiently large number of mesh refinements has been reached.
Therefore, in our numerical results (reported in Section~\ref{CompExperiments})
a sufficiently large number of mesh refinements is used to ensure that we have reached
an \emph{asymptotic regime}.
\end{remark}

In light of the discussion above, we set $d_{ij}^{\text{min}} :=
d_{ij}^{\low,n}$ if $\xcoord_i = \xcoord_j$, for the case of \emph{odd}
polynomial degree. This choice is roughly equivalent the
interfacial Lax--Friedrichs flux. However, for \emph{even} polynomial degree we
adapt the idea outlined in \cite{Zheng2023} and set $d_{ij}^{\text{min}} =
\mathcal{O}(h^\frac{1}{2}) d_{ij}^{\low,n}$ if $\xcoord_i = \xcoord_j$.
However, we want to avoid introducing a length scaling $h$ into the high-order
viscosity, therefore we set
\begin{align}
  \label{ViscMin}
  d_{ij}^{\text{min}} :=
  \begin{cases}
    c_k \widehat{h}_{ij}^{p_k} d_{ij}^{\low,n} &\text{if } \xcoord_i =
    \xcoord_j,
    \\[0.25em]
    0 &\text{otherwise},
  \end{cases}
\end{align}
where $\widehat{h}_{ij} := \big(\tfrac{1}{2} \tfrac{m_i + m_j}{|\Omega|}
\big)^{\frac{1}{d}}$ is a dimensionless mesh size, and the constants $c_k$ and
$p_k$ are set to $c_k=1$, and
\begin{align*}
  p_k = \begin{cases}
    \frac{1}{2} \text{ if } k \text{ is even},
    \\
    0 \text{ if } k \text{ is odd}.
  \end{cases}\quad
\end{align*}
The graph viscosity $d_{ij}^{\text{min}}$ is symmetric by construction.
The choice of coefficient $p_k$ follows from the theoretical and
computational discussion outlined in \cite{Zheng2023}. Extensive
numerical tests indicate that the power $p = \frac{1}{2}$ for the case of
even polynomial degree is indeed optimal; it maintains a stable
approximation of smooth solutions on unstructured meshes, as well as
optimal convergence rates in the $L^1$ norm. On the other hand, the
non-dimensional constant $c_k>0$ can be chosen more freely. For the sake of
reproducibility we simply report our particular choice for the parameters used
in our numerical results (Section~\ref{CompExperiments}), which we found to be
reasonable for a large number of test cases.

\begin{remark}[Superconvergence]
  We note that discontinuous spatial discretizations of even polynomial
  degree (without stabilization) are superconvergent for the case of
  linear conservation equations on uniform meshes. For instance,
  non-stabilized discontinuous $\mathbb{Q}^2$ spatial discretizations have been
  shown to be fourth order accurate for smooth linear problems on uniform
  meshes with periodic boundary conditions, see\cite{Ains2014}. In view of
  these theoretical results, it seems tempting to simply set
  $d_{ij}^\text{min} = 0$ as a minimal viscosity choice. However, we have
  observed numerically that these theoretical results for linear
  conservation equations do not necessarily translate to the case of
  non-linear hyperbolic systems, nor to general unstructured hexahedral
  meshes.
\end{remark}

\subsection{Entropy viscosity}
\label{SubMinEV}

The entropy viscosity commutator has been introduced in \cite{Guer2011}.
Here, we summarize a variant discussed in \cite{Guermond2014}. Consider the
generalized Harten entropy $f\big(s(\state)\big)$, where $s(\state)$ is
the specific entropy~\eqref{EntropyEuler}, and $f$ is any function
satisfying the constraints
\begin{align*}
  f'(s) > 0,
  \; f'(s) c_p^{-1} - f''(s) > 0 .
\end{align*}
Here $c_p = \theta \frac{\partial s}{\partial \theta}$ is the specific heat at
constant pressure, see \cite{Viscous2014}. For any admissible state $\state =
[\rho, \mom, \totme]^\transp \in \mathbb{R}^{d+2}$ we adopt the shorthand
notation $f(\state) := f\big(s(\state)\big)$. We then
define a \emph{shifted} generalized mathematical entropy $\Phi_i^n(\state)$ and
a corresponding entropy-flux $\eflux_i^n(\state)$:
\begin{align*}
  \Phi_i^n(\state) = \rho \big[f(\state) - f(\statevector_i^n)\big],
  \qquad
  \eflux_i^n(\state) = \mom \big[f(\state) - f(\state_i^n)\big].
\end{align*}
Let $\nabla_{\state}\Phi_i^n(\state) \in \mathbb{R}^{d+2}$ denote the gradient
of $\Phi_i^n(\state)$ with respect to the state $\state$ and set:
\begin{align*}
  R_i &:= \sum_{j \in \mathcal{I}(i)} \big[\eflux_i(\statevector_j^n) -
  (\nabla_{\state}\Phi_i^n)^\transp \flux(\statevector_j^n)\big] \cdot
  \bv{c}_{ij}
  \\
  D_i &:= \big| \sum_{j \in \mathcal{I}(i)}
  \eflux(\statevector_j^n)\cdot\bv{c}_{ij} \big|
  + \sum_{k \in m} \big|[\nabla_{\state}\Phi_i]_k\big|
  \big|\flux_k(\statevector_j^n)\bv{c}_{ij}\big|
\end{align*}
where $[\nabla_{\state}\Phi_i]_k$ is the $k$-th component of
$\nabla_{\state}\Phi_i$ and $\flux_k(\statevector_j^n) \in \mathbb{R}^{1 \times
d}$ denotes the $k$-th row of the flux $\flux(\statevector_j^n) \in
\mathbb{R}^{(d+2)\times d}$. We define the normalized entropy-viscosity
residual $N_i$ and entropy viscosity $d_{ij}^{\text{ev}}$ as
\begin{align*}
  N_i := \frac{R_i}{D_i}
  \quad\text{and}\quad
  d_{ij}^{\text{ev},n} := d_{ij}^\low \min \big\{ c_\text{ev}
  \max\{|N_i|,|N_j|\big\}, 1\},
\end{align*}
where $c_{\text{ev}}$ is a constant that will in general depend on the
polynomial degree. From numerical explorations we have chosen to use
$c_{\text{ev}} = 1, 0.5, 0.25$ for the polynomial degrees $k = 1,2,3$,
respectively.

\subsection{Convex limiting: algebraic reformulation}
\label{SubAlgFlux}
In analogy to Lemma~\ref{LemSkew}, we rewrite the high-order scheme
\eqref{HighScheme} as follows:
\begin{align}
  \label{HighSchemeAlgebraic}
  m_i (\statevector_i^{\high,n+1} - \statevector_i^n) + \sum_{j \in
  \mathcal{I}(i)} \bv{F}_{ij}^{\high} + \bv{F}_{i}^{\bdry,\high} = \bzero,
\end{align}
where the algebraic fluxes $\bv{F}_{ij}^{\high}$ are given by
\begin{align*}
  \bv{F}_{ij}^{\high} &:= \dt_n \big(\flux(\statevector_j^n) +
  \flux(\statevector_i^{n})\big)\,\bv{c}_{ij} - \dt_n d_{ij}^{\high,n}
  (\statevector_j^n - \statevector_i^n)
  \\
  &\qquad\qquad\qquad\qquad\qquad\qquad
  + (m_{ij} - \delta_{ij} m_i) (\statevector_j^{\high,n+1} - \statevector_j^n
  -\statevector_i^{\high, n+1} + \statevector_i^n),
  \\[0.5em]
  \bv{F}_{i}^{\bdry,\high} &:= \dt_n \big(\flux(\statevector_i^{\bdry,n}) +
  \flux(\statevector_i^{n})\big)\,\bv{c}_{i}^{\bdry} - \dt_n
  d_{ij}^{\bdry,\high,n} (\statevector_i^{\bdry,n} - \statevector_i^n).
\end{align*}
Here, we have used the fact that $\sum_{j \in \mathcal{I}(i)} (m_{ij} -
\delta_{ij} m_i) = 0$, which is a well known technique~\cite{DiscIndep2019,
kuzmin2012book} for absorbing the consistent mass matrix $m_{ij}$ into the
fluxes. We note that the high-order algebraic fluxes are skew symmetric,
$\bv{F}_{ij}^{\high} = - \bv{F}_{ji}^{\high}$. Furthermore, subtracting
\eqref{skewSymmetric} from \eqref{HighSchemeAlgebraic}, after some
reorganization we obtain:
\begin{align}
  \label{HighLowOrderIdentity}
  m_i \statevector_i^{\high, n+1} = m_i \statevector_i^{\low, n+1}
  + \sum_{j \in \mathcal{I}(i)} \bv{A}_{ij} + \bv{A}_{i}^{\bdry},
\end{align}
where $\bv{A}_{ij} := \bv{F}_{ij}^{\low} - \bv{F}_{ij}^{\high}$ and
$\bv{A}_{i}^{\bdry} := \bv{F}_{i}^{\bdry,\low} - \bv{F}_{i}^{\bdry,\high}$,
with $\bv{A}_{ij}$ skew symmetric, i.e. $\bv{A}_{ij}=-\bv{A}_{ij}$.
Equation~\ref{HighLowOrderIdentity} now serves as a starting point for the
convex limiting technique. We compute the new, blended update
$\statevector_i^{n+1}$ by setting
\begin{align}
  \label{LimitedScheme}
  m_i \statevector_i^{n+1} =
  m_i \statevector_i^{\low, n+1}
  + \sum_{j \in \mathcal{I}(i)} \limiter^n_{ij} \bv{A}_{ij} +
  \limiter_{i}^{\bdry,n} \bv{A}_{i}^{\bdry},
\end{align}
where $\limiter^n_{ij} = \limiter^n_{ji} \in [0,1]$ and
$\limiter_{i}^{\bdry,n} \in [0,1]$ are limiter coefficients. From
\eqref{LimitedScheme} and \eqref{HighLowOrderIdentity}, it is evident that
$\limiter_{ij}, \limiter_{i}^\bdry = 0$ recovers the low-order scheme and,
conversely, $\limiter_{ij}, \limiter_{i}^\bdry = 1$ the high-order scheme.
The goal is thus to choose the limiter coefficients as large as possible
while maintaining a pointwise invariant-set property (in the spirit of
Lemma~\ref{InvariantProp}), \ie, $\statevector_i^{n+1} \in \mathcal{A}$.

\begin{remark}[First-order scheme and high-order polynomials]
  In this work, we use the same finite element basis and stencil for both
  the high-order and low-order methods. However, we note that several
  authors \cite{Lohmann2017, Pazner2021, GuermondSubgrid} have explored the
  argument that the first-order method degrades their accuracy with
  increasing polynomial degree $k$. Therefore the first-order scheme should
  be computed using a subgrid of low-order polynomial degree, usuall $k =
  0$ or $k = 1$. While this is indeed true that the first-order method
  degrades its accuracy with increasing polynomial degree, the importance
  of such a degradation is not substantial for modest polynomial degree, as
  illustrated by Table \ref{ErrorGrowthTable}. For instance, in the table
  it can be observed that the error of the $\mathbb{Q}_2$ first-order
  scheme is $1.6\times $ the error of the $\mathbb{Q}_1$ first-order
  scheme. Similarly, the error of the $\mathbb{Q}_3$ first-order scheme is
  $2.3\times$ larger than the error of the corresponding $\mathbb{Q}_1$
  method using the same total number of DOFs. We believe that this higher
  error pre-factor is acceptable: in return we obtain a simpler method with
  a more straight-forward code, while avoiding all the complexity
  associated to have a low-order method defined in a subgrid. Of course,
  for very high order polynomial degrees, say $k \geq 5$, this sentiment
  might not hold true.
\end{remark}

\subsection{Convex limiting: local bounds and line search}
\label{SubBoundsAndLim}

We want to enforce \emph{local} bounds on the density $\rho$ and the
specific entropy $s(\state)$ given by \eqref{EntropyEuler}. However, the
logarithm in \eqref{EntropyEuler} makes the specific entropy a rather
cumbersome quantity to work with directly. Following our previous work
\cite{Euler2018, DiscIndep2019, Guermond2022}, we use the rescaled
quantity $\widetilde{s}(\state) = \rho^{-\gamma}\inte(\state) =
\tfrac{1}{\gamma-1}\exp(s(\state))$ instead. Since it is a monotonic
rescaling, enforcing a minimum bound on $\widetilde{s}(\state)$ will also
enforce a minimum bound on $s(\state)$. For each node $i \in \vertices$, we
construct local bounds $\rhoimin$, $\rhoimax$, $\stildeimin$, and
construct limiter coefficients $\limiter^n_{ij}$, $\limiter_{i}^{\bdry,n}$,
such that the final update $\statevector_i^{n+1} = [\rho_i^{n+1},
\mom_i^{n+1}, \totme_i^{n+1}]$ given by \eqref{LimitedScheme},
satisfies bounds:
\begin{align*}
  \rhoimin \leq \rho_i^{n+1} \leq \rhoimax,
  \qquad
  \widetilde{s}(\statevector_i^{n+1}) \geq \stildeimin.
\end{align*}
In this manuscript we use the following local bounds
\begin{align}
  \label{eq:local_bounds}
  \begin{cases}
    \begin{aligned}
      \rhoimin &:= r_h^-\,
      \min\Big\{
        \rho(\overline{\statevector}_{i}^{\bdry,n})
        \,,\,
        \min_{j \in \mathcal{I}(i)}\min_{k \in \mathcal{I}(j)}\rho_k^n
        \,,\,
        \min_{j \in \mathcal{I}(i)}\min_{k \in \mathcal{I}(j)}
        \rho(\overline{\statevector}_{jk}^n)
      \Big\},
      \\
      \rhoimax &:= r_h^+\,
      \max\Big\{
        \rho(\overline{\statevector}_{i}^{\bdry,n})
        \,,\,
        \max_{j \in \mathcal{I}(i)}\max_{k \in \mathcal{I}(j)}\rho_k^n
        \,,\,
        \max_{j \in \mathcal{I}(i)}\max_{k \in \mathcal{I}(j)}
        \rho(\overline{\statevector}_{jk}^n)
      \Big\},
      \\
      \stildeimin &:= r_h^- \,
      \min\Big\{
        \widetilde{s}(\overline{\statevector}_{i}^{\bdry,n})
        \,,\,
        \min_{j \in \mathcal{I}(i)}\min_{k \in
        \mathcal{I}(j)}\widetilde{s}(\statevector_k^n)
        \,,\,
        \min_{j \in \mathcal{I}(i)}\min_{k \in
        \mathcal{I}(j)}\widetilde{s}(\overline{\statevector}_{jk}^n)
      \Big\},
    \end{aligned}
  \end{cases}
\end{align}
where the bar states $\overline{\statevector}_{jk}^n$ and
$\overline{\statevector}_{i}^{\bdry,n}$ are defined in \eqref{UsualBarState} and
\eqref{BoundaryBarState} respectively. We note that the use of the bar states in
\eqref{eq:local_bounds} is owed to equation \eqref{ConvexRef}: the
low-order update is a convex combination of the bar states; see also the
discussion in \cite[Section 4.1]{Euler2018} and \cite[Lemma
7.15]{DiscIndep2019}. The relaxation coefficients $r_h^\pm$ are defined as follows:
\begin{align*}
  r_h^{-} := 1 - c_r \widehat{h}_i^{p_r}, \quad
  r_h^{+} := 1 + c_r \widehat{h}_i^{p_r}, \quad
\end{align*}
where $\widehat{h}_i = \big(\tfrac{m_i}{|\Omega|} \big)^{\frac{1}{d}}$ is a
dimensionless mesh size. For the numerical tests in
Section~\ref{CompExperiments} we use the constants
\begin{align}
  \label{ViscMinConst2}
  c_r = 4.0 \ \ \text{and} \ \ p_r = 1.5
\end{align}
for all polynomial degrees throughout.
The constants have been chosen with a quick parametric study such that
we observe expected convergence rates for the numerical tests summarized
in Section~\ref{CompExperiments}.
The relaxation coefficients \eqref{ViscMinConst2} are necessary to recover
optimal convergence rates because a strict enforcement of the \emph{local}
minimum principle on the specific entropy would result in a first order
scheme; see \cite{Perthame1994, Euler2018}.
We note that this relaxation has no consequence on the robustness of the
scheme: As long as the initial data is admissible, the update will result
again in an admissible state.

A tempting alternative to the relaxation of the local bounds
\eqref{eq:local_bounds} is to dispense with using local bounds altogether
and replacing them with a single set of global bounds,
$\{\rho_{\text{min}}^{\text{global}}, \rho_{\text{max}}^{\text{global}},
\widetilde{s}_{\text{min}}^{\text{global}}\}$ with
$\rho_{\text{min}}^{\text{global}} > 0$ and
$\widetilde{s}_{\text{min}}^{\text{global}} > 0$, for all degrees of
freedom. While positivity preserving, such a limiter strategy--at least
from our experience---lacks control on over- and undershoots and leads to
unsatisfying numerical results for benchmark configurations.

The limiter coefficients $\limiter^n_{ij}$, $\limiter_{i}^{\bdry,n}$ are
now constructed with the help of one dimensional line searches
\cite{Euler2018}. For this, we first rewrite \eqref{eq:local_bounds} in
terms of a convex set,
\begin{align}
  \label{BiSet}
  \mathcal{B}_i = \Big\{
  \statevector = [\rho, \mom, \totme]^\transp \in
  \mathbb{R}^{d+2} \, \Big| \,
  \rhoimin \leq \rho \leq \rhoimax,\;
  \widetilde{s}(\statevector) \geq \stildeimin \Big\},
\end{align}
and rewrite \eqref{LimitedScheme} as follows:
\begin{align}
  \label{ConvexScheme}
  \statevector_i^{n+1}
  = \sum_{j \in \mathcal{I}(i)}
  \kappa_i \big(\statevector_i^{\low, n+1} + \ell^n_{ij} \bv{P}_{ij}\big)
  + \kappa_i \big(\statevector_i^{\low, n+1} + \ell_{i}^{\bdry,n}
  \bv{P}_{i}^{\bdry}\big),
\end{align}
with $\bv{P}_{ij} := \tfrac{1}{\kappa_i m_i}\bv{A}_{ij}$,
$\bv{P}_{i}^{\bdry} = \tfrac{1}{ \kappa_i m_i}\bv{A}_{i}^{\bdry}$, and
$\kappa_i := \big(\text{card}\mathcal{I}(i)+1\big)^{-1}$.

Equation \eqref{ConvexScheme} describes the update $\statevector_i^{n+1}$
as a convex combination of limited, unidirectional updates
$\statevector_i^{\low, n+1} + \ell^n_{ij} \bv{P}_{ij}$. This allows us to
reduce the construction of $\ell^n_{ij}$ and $\ell_{i}^{\bdry,n}$ to
solving one dimensional line searches \cite[Lemma 4.3]{Euler2018}:
\begin{lemma}
  \label{ConsInvariants}
  Assume that $\statevector_i^{\low, n+1} + \ell_{ij} \bv{P}_{ij} \in
  \mathcal{B}_i$ for all $j \in \mathcal{I}(i)$ and $\statevector_i^{\low,
  n+1} + \ell_{i}^{\bdry} \bv{P}_{i}^{\bdry} \in \mathcal{B}_i$. Then,
  $\statevector_i^{n+1}$ as defined by \eqref{ConvexScheme}, belongs
  to the set $\mathcal{B}_i$ as well.
  If $l_{ij}$ are symmetric, meaning $\ell_{ij} =
  \ell_{ji}$, then the convex limited high-order update
  $\statevector_i^{n+1}$ is also conservative.
\end{lemma}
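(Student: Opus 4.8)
The plan is to recognize Lemma~\ref{ConsInvariants} as a direct application of the convexity of the set $\mathcal{B}_i$. The key observation is that equation \eqref{ConvexScheme} writes $\statevector_i^{n+1}$ as a convex combination of the limited unidirectional updates $\statevector_i^{\low,n+1}+\ell_{ij}\bv{P}_{ij}$ (for $j\in\mathcal{I}(i)$) together with the single boundary update $\statevector_i^{\low,n+1}+\ell_i^{\bdry}\bv{P}_i^{\bdry}$. By hypothesis every one of these individual states lies in $\mathcal{B}_i$. Since $\mathcal{B}_i$ is convex, any convex combination of points in $\mathcal{B}_i$ stays in $\mathcal{B}_i$, so it only remains to verify that the coefficients in \eqref{ConvexScheme} are indeed \emph{convex} weights.

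The main technical content, therefore, is the bookkeeping of the coefficients. There are $\mathrm{card}\,\mathcal{I}(i)$ terms indexed by $j$ plus one boundary term, giving $\mathrm{card}\,\mathcal{I}(i)+1$ summands in total, each carrying the weight $\kappa_i=\big(\mathrm{card}\,\mathcal{I}(i)+1\big)^{-1}$. Thus the weights are nonnegative and sum to exactly $\big(\mathrm{card}\,\mathcal{I}(i)+1\big)\cdot\kappa_i=1$, confirming that \eqref{ConvexScheme} is a genuine convex combination.

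Concretely, \textbf{first} I would state that $\mathcal{B}_i$ as defined in \eqref{BiSet} is convex; this follows because $\rho\mapsto\rho$ is linear (so the density bounds $\rhoimin\le\rho\le\rhoimax$ define a slab, which is convex) and because $\widetilde{s}(\statevector)=\rho^{-\gamma}\inte(\state)$ is a concave function of $\statevector$ on the admissible set (equivalently, a monotone rescaling of the concave specific entropy $s$), so the superlevel set $\{\widetilde{s}\ge\stildeimin\}$ is convex; the intersection of these convex sets is convex. \textbf{Second}, I would invoke the hypothesis that each of the $\mathrm{card}\,\mathcal{I}(i)+1$ points $\statevector_i^{\low,n+1}+\ell_{ij}\bv{P}_{ij}$ and $\statevector_i^{\low,n+1}+\ell_i^{\bdry}\bv{P}_i^{\bdry}$ belongs to $\mathcal{B}_i$. \textbf{Third}, I would note the coefficient count above to certify that \eqref{ConvexScheme} is a convex combination, and conclude by convexity that $\statevector_i^{n+1}\in\mathcal{B}_i$.

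The only place requiring genuine care—rather than a one-line appeal—is the convexity of $\mathcal{B}_i$, and specifically the concavity of $\widetilde{s}$; this is a standard fact for the Euler entropy (it is the quantity whose superlevel sets define the invariant set $\mathcal{B}$ used throughout), so I expect it can be cited rather than re-derived, mirroring the reference to \cite[Lemma 4.3]{Euler2018} already made in the text. The rest of the argument is the abstract principle that a convex combination of points in a convex set remains in that set, so I anticipate no substantive obstacle; the proof should be essentially immediate once the convexity of $\mathcal{B}_i$ and the convex-weight structure of \eqref{ConvexScheme} are both made explicit.
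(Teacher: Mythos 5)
Your proof takes the same route as the paper: the paper offers no standalone argument for Lemma~\ref{ConsInvariants}, deferring instead to \cite[Lemma~4.3]{Euler2018}, whose content is precisely your observation that \eqref{ConvexScheme} is a convex combination --- $\mathrm{card}\,\mathcal{I}(i)+1$ terms, each with weight $\kappa_i=\big(\mathrm{card}\,\mathcal{I}(i)+1\big)^{-1}$, hence nonnegative weights summing to one --- of states that the hypothesis places in $\mathcal{B}_i$, so that convexity of $\mathcal{B}_i$ finishes the argument. Your coefficient bookkeeping is correct.

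One correction to the single technical step you flag: your justification of the convexity of $\mathcal{B}_i$ is wrong as stated. Neither $\widetilde{s}(\state)=\rho^{-\gamma}\varepsilon(\state)$ nor the specific entropy $s$ is concave on the admissible set: restricting to $\mom=\bzero$ one has $\widetilde{s}=\totme\,\rho^{-\gamma}$, so $\partial^2_{\rho}\widetilde{s}=\gamma(\gamma+1)\totme\,\rho^{-\gamma-2}>0$, i.e.\ $\widetilde{s}$ is strictly convex along the density direction. What is true (and what the cited literature actually proves) is \emph{quasi}concavity: the superlevel sets are convex. Concretely, on $\{\rho>0\}$ and for $\stildeimin\ge 0$ one has $\{\widetilde{s}\ge\stildeimin\}=\{\varepsilon(\state)-\stildeimin\,\rho^{\gamma}\ge 0\}$, and the function $\varepsilon(\state)-\stildeimin\,\rho^{\gamma}$ is concave because $\varepsilon(\state)=\totme-\tfrac12|\mom|^2/\rho$ is concave in the conserved variables (quadratic-over-linear) while $\rho\mapsto\rho^{\gamma}$ is convex for $\gamma>1$ and $\stildeimin\ge0$. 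Intersecting this convex set with the slab $\rhoimin\le\rho\le\rhoimax$ gives convexity of $\mathcal{B}_i$. With this repair --- or simply a citation of the quasiconcavity results in \cite{GuerPop2016, Euler2018} in place of your concavity claim --- your proof is complete and matches the paper's in substance.
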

In summary, the limiter coefficients are chosen such that
\begin{align*}
  \statevector_i^{\low, n+1} + \ell_{ij} \bv{P}_{ij} \in \mathcal{B}_i
  \text{ for all } j \in \mathcal{I}(i),
  \quad
  \text{and } \statevector_i^{\low, n+1} + \ell_{i}^{\bdry}
  \bv{P}_{i}^{\bdry} \in \mathcal{B}_i,
\end{align*}
which in turn implies that $\statevector_i^{n+1}\in \mathcal{B}_i$.


\section{High-performance implementation and computational results}
\label{CompExperiments}

We now outline a high-performance implementation of the numerical scheme in
the hydrodynamic solver framework~\texttt{ryujin}~\cite{ryujin-2021-1,
Guermond2022}. The code supports discontinuous and continuous finite
elements on quadrangular meshes for the spatial approximation and is built
upon the \texttt{deal.II} finite element library~\cite{dealII95}. We
conclude the section by discussing a number of validation and benchmark
results.

\subsection{Implementation}

Due to the graph-based construction of the method, the implementation of the
proposed discontinuous Galerkin scheme can be realized in analogy to the
scheme described for continuous elements in~\cite{ryujin-2021-1}, and can be
applied to arbitrarily unstructured meshes including local adaptive refinement.
The implementation computes the necessary information row-by-row by a
(parallel) loop over the index range of variable $i$, with single-instruction
multiple-data (SIMD) vectorization across several rows to ensure a high
utilization of data-level parallelism. The kernels are written to balance
data access and computations for optimal performance on modern CPU-based
high-performance architecture, performing the following main steps:
\begin{itemize}
\item For the computation of the low-order update, the flux
  $\flux(\uvect^n_h)$ is evaluated point-wise and the graph
  viscosity~\eqref{eq:dij_low_order} is computed with a point-wise Riemann
  solver. For data locality reasons, most of the factors for the high-order
  viscosity are also computed in the necessary sweep over all mesh nodes.
\item The high-order update and convex-limiting steps~\eqref{ConvexScheme}
  involve combinations of the fluxes along the $i$ and $j$ indices as well as
  the high-order viscosity~\eqref{HighOrderViscosity}, together with the
  evaluation of the limiter coefficients $\ell^n_{ij}$.
\item In order to obtain converged results of the one-dimensional line
  searches, the limiter step is executed twice, necessitating two sweeps
  through all nodes.
\end{itemize}
As described in~\cite{ryujin-2021-1}, the computational cost of the above
steps is not only dominated by the actual flux computations, and the
elevated number of around 14--20 divisions per non-zero entry $(i, j)$ of
the stencil, but also by four transcendental power functions per non-zero
(using Pad\'e-type approximations) that play a crucial role. Furthermore,
the cost for indirect addressing into generic sparse matrix data structures
along the index $j$ are also relatively high. Overall, the proposed scheme
yields an arithmetic cost proportional to $\mathcal O((k+1)^d)$ operations
per degree of freedom. This is a substantial cost when compared to
state-of-the-art cell-based implementations of discontinuous Galerkin
methods, where modern implementations typically utilize on-the-fly
evaluation of the underlying finite-element integrals using
sum-factorization techniques for $\mathcal{O}(k+1)$ complexity per degree
of freedom~\cite{Kron2019,Fehn2019}, or related properties deduced by
spectral polynomial bases and one-dimensional differentiation
operations~\cite{Ranocha2023}. The cost has to be contrasted against the
mathematically proven robust realization proposed here.

Note that the chosen implementation does not utilize the additional
structure provided by the element-wise basis functions, which could allow
to fuse some of the indirect addressing and additional computations for
the unknowns inside a finite element cell, compared to the abstract row-by-row
processing of our approach. However, as the computational
part with expensive transcendental functions is the more restrictive
bottleneck on current architectures~\cite{ryujin-2021-1}, which is
addressed in ongoing research.

\subsection{Validation tests}

We now verify the proposed method for three different solution regimes: (i) a
smooth analytic solution given by the isentropic vortex~\cite{YeeSand1999};
(ii) a semi-smooth solution (continuous with second derivatives of bounded
variation) given by a single rarefaction wave; and (iii) the discontinuous
solution of the LeBlanc shock tube that has large pressure and density jumps.
For all three cases analytic expressions for the solution can be found in
\cite{Euler2018}, specifically we use \cite[Eq.\,5.3]{Euler2018},
\cite[Tab.\,2]{Euler2018}, and \cite[Tab.\,4]{Euler2018} with the same choices
for computational domains and parameters. As a figure of merit we introduce a
consolidated error norm:
\begin{align*}
  L^p\text{-error} \;:=\; \frac{\|\rho_h -
  \rho\|_{L^p(\Omega)}}{\|\rho\|_{L^p(\Omega)}}
  + \frac{\|\mom_h - \mom\|_{L^p(\Omega)}}{\|\mom\|_{L^p(\Omega)}}
  + \frac{\|\totme_h - \totme\|_{L^p(\Omega)}}{\|\totme\|_{L^p(\Omega)}},
\end{align*}
for $p = 1,2,\infty$ evaluated at the final time. We enforce Dirichlet
boundary conditions throughout by setting the boundary data
$\statevector_i^{\bdry, n}$ to the exact (time-dependent) solution. We describe
each test with more details in the following bullets:

\paragraph{Isentropic vortex (test case (i))}
We set the computational domain for the smooth test case (isentropic vortex) to
the square $[-5,5]^2$; see \cite[Eq.\,5.3]{Euler2018}. For the
sake of completeness we repeat the formulas of the exact analytical
solution \cite{YeeSand1999}:
\begin{align*}
\rho(\xcoord, t) = (\rho_\infty + \delta \rho(\xcoord, t))^{\frac{1}{\gamma
-1}} \ ,  \ \
\vel = \vel_\infty + \delta\vel \ ,  \ \
p(\xcoord, t) = \rho(\xcoord,t)^{\gamma} \\
\delta\vel(\xcoord,t) = \tfrac{\beta}{2 \pi} e^{1 - r^2}
[- \overline{\xcoord}_2 , \overline{\xcoord}_1]^\transp \ , \ \
\delta \rho(\xcoord,t) = - \tfrac{(\gamma - 1) \beta^2}{8\gamma \pi^2} e^{1 -
r^2}
\end{align*}
$\overline{\xcoord} = \xcoord - \xcoord_0 - t \vel_\infty$, $\xcoord =
[\xcoord_1, \xcoord_2]^\transp$ are the space coordinates, $\xcoord_0 =
[\xcoord_{10}, \xcoord_{20}]$ is the initial position of the vortex, and $r =
|\overline{\xcoord}|_{\ell^2}$. For all our tests: $\rho_\infty = 1$,
$\vel_{\infty} = [1,1]^\transp$, $\xcoord_0 = [-1, -1]$, $\gamma = 5/3$, and
$\beta = 5.0$. The initial time is $t_0 = 0$ and final time is $t_F = 2$.
We discretize the domain
with uniform grids with $n_e$ elements per edge, where $n_e = n_k \cdot 2^r$
with $n_k = 24, 16, 12$ for the cases polynomial degrees $k = 1,2,3$,
respectively. We now create a series of increasingly refined meshes by varying
$r$ from $0$ to $5$. The values of $n_k$ are chosen such that each refinement
level $r$ has the same number of degrees of freedom for all polynomial degrees
$k = 1,2,3$. For time integration we use
SSPRK-54 throughout, a fourth order strong stability preserving Runge Kutta
method~\cite{Spit2002}. Computational results are summarized in
Table~\ref{ErrorIsentropicConvex} in page~\pageref{ErrorIsentropicConvex}.
Classical error analysis for linear advection problems indicate that the
expected rate in the $L^2(\Omega)$-norm is of order $\mathcal{O}(h^{k +
\frac{1}{2}})$. In general, we observe expected convergence rates for all
reported test cases. A notable exception is a slight reduction
of convergence rates in the $L^1(\Omega)$ and $L^\infty(\Omega)$ norms for
polynomial degree $k=3$.

\paragraph{Rarefaction wave (test case (ii))}
Similarly, for the rarefaction test case (ii) we split the
unit interval $[0,1]$ into $n_e = n_k \cdot 2^r$ uniform subintervals with
$n_k = 60, 40, 30$ for $k = 1,2,3$, respectively, and by varying $r$ from
$0$ to $7$. Regarding time-integration, we use the SSPRK3 scheme
for all polynomial degrees. We note that the error for the rarefaction wave is
dominated by the fact that the initial data is non-differentiable in $\xcoord =
0.2$. This test also has the added difficulty that there is a sonic point at
$\xcoord = 0.2$: numerical methods without enough artificial viscosity will not
produce an entropic solution. Error estimates from polynomial interpolation
suggest a limit of $\mathcal{O}(h^2)$ for the convergence rate in the
$L^1$-norm. However, we are not aware of any scheme capable of delivering second
order rates for the rarefaction wave test. For instance, finite volume methods
with piecewise linear reconstructions deliver rates $\mathcal{O}(h^p)$ with $p
\in [1.333, 1.50]$, see \cite{PopovChua2021}; semi-discretely entropy-stable
methods yield $p \leq 1.50$ regardless of the polynomial degree, see
\cite{Chan2023}; first-order continuous finite elements with using
entropy-viscosity and convex limiting achieve $p \in [1.60, 1.65]$, see
\cite{Guermond2014}. Our results are reported in
Table~\ref{ErrorRarefactionConvex} on page~\pageref{ErrorRarefactionConvex}.
We observe a convergence order $\mathcal{O}(h^p)$ in the $L^1$-norm with
average $p\approx 1.70$, $1.60$, and $1.63$ for polynomial degrees $k=1$, $2$,
and $3$, respectively.

\paragraph{Leblanc shock tube (test case (iii))} Results for the LeBlanc
shocktube are summarized in Table~\ref{ErrorLeblancConvex}
page~\pageref{ErrorLeblancConvex}. We observe the expected linear convergence
rate for all polynomial degrees $k=1$, $2$, $3$. From mathematical approximation
theory, it is well known that high order polynomial degrees offer no advantage
when approximating discontinuous problems. In this sense, the numerical results
in Table~\ref{ErrorLeblancConvex} for the LeBlanc shocktube test are optimal.
Note that, in Table~\ref{ErrorLeblancConvex}, the exact same number of global
degrees of freedom is used on each refinement level for $\mathbb{Q}_1$,
$\mathbb{Q}_2$ and $\mathbb{Q}_3$ elements. Comparing the obtained $L^1$-error
for all three cases we tend to conclude that $\mathbb{Q}_1$ elements are the
optimal choice---at least for the case of discontinuous solutions with
strong shocks. For the same number of global degrees of freedom they offer
the smallest $L^1$-error while having low computational complexity, and
comparatively large time-step sizes.

\begin{remark}[Verification of boundary conditions with isentropic vortex]
  In order to test our implementation of boundary conditions, in Section
  \ref{subsec:BCvalidation} we modify the isentropic vortex benchmark. We
  increase the final time $t_F$ so that the simulation ends with the center
  of the isentropic situated exactly above the top right corner of the
  computational domain. By doing so, the correct treatment of boundary data
  is essential for recovering optimal convergence rates.
\end{remark}

\subsection{Accuracy of boundary condition enforcement}
\label{subsec:BCvalidation}
We now briefly evaluate the performance of the boundary condition
enforcement in \eqref{cijdijscheme} and \eqref{HighScheme} by repeating
the smooth isentropic vortex test, case (i), with a modified final time $t_F$
for the convex-limited method and bilinear finite elements ($k=1$).
Specifically, we choose a final time of $t_F = \frac{6}{M}$ for increasing
choices of (directional) vortex speed $M = 1$, $1.5$, $2.0$, $2.5$. Here, $M=1$
implies that the center of the vortex is moving exactly with the speed of sound
$a$ in $x$ and in $y$ direction individually. We note that for $M = 1$ and $M =
1.5$ a significant portion of the top and right edges (outflow boundaries) will
be subsonic. With the choice of final time $t_F = \frac{6}{M}$, the center of
the vortex is located exactly on top of the top right corner at final time
$t_F$. As a rigorous figure of merit we examine convergence rates in the
$L^1$-norm. Four different strategies are tested for the construction of
boundary data $\statevector_i^{\bdry,n}$:
\begin{itemize}
  \item[(a)]
    exact Dirichlet data by setting $\statevector_i^{\bdry,n}
    :=\statevector_i^{\text{sol},n}$ on the entirety of the boundary, where
    $\statevector_i^{\text{sol},n}$ is the analytical solution;
  \item[(b)]
    sub/super-sonic boundary conditions with exact data:
    $\statevector_i^{\text{d},n}=\statevector_i^{\text{sol},n}$;
  \item[(c)]
    sub/super-sonic boundary conditions with a far-field state:
    $\statevector_i^{\text{d},n}=\statevector^{\text{far},n}$;
  \item[(d)]
    sub/super-sonic boundary conditions with the old state: we set
    $\statevector_i^{\text{d},n}=\statevector_i^{n}$.
\end{itemize}
Strategies (a) and (b) are intended to evaluate the formal consistency of
the method when the exact boundary data is available. On the other hand,
strategies (c) and (d) are meant to evaluate the performance of the method
when exact boundary data is not available. For strategies (c) and (d) no
rates can be expected as we evaluate the error up to the boundary. The
behaviour of strategies (c) and (d) are of particular interest in the
context of channel flows and transonic exterior aerodynamics. For such
applications exact boundary data is unavailable and the specific subsonic
or supersonic nature is unknown as well. The numerical results for the four
strategies are summarized in Table~\ref{ErrorBoundaryConditions} for the case
of $\mathbb{Q}_1$ spatial discretization. We see that strategies (a) and (b)
deliver the proper convergence rates. On the other hand, even though no rates
should be expected for the case of strategies (c) and (d), we still observe
proper convergence rates once the regime becomes fully supersonic ($M = 2$ and
$M = 2.5$). This indicates that the sub/super-sonic boundary condition approach
is indeed capable of selecting the boundary-data from the proper upwind
direction.

\subsection{High fidelity simulation: Mach 3 flow past a cylinder}
We now present numerical results for a 2D benchmark configuration
consisting of a Mach 3 flow past a cylinder with with radius $0.25$ is
centered along $(0.6, 0, z)$. The computational domain is $\Omega = [0, 4]
\times [-1, 1]$ and is equipped with Dirichlet boundary conditions on the
left of the domain, slip boundary conditions on the cylinder and the top
and bottom of the domain, and do nothing boundary outflow conditions on the
right side. The initial flow configuration is that of a uniform flow at
Mach 3 \cite{Euler2018}. The computational domain is meshed with an
unstructured quadrilateral coarse mesh. A higher resolution is obtained by
subdividing every quadrilateral into 4 children an fixed number of times
and adjusting newly generated nodes on the cylinder boundary to lie on the
curved surface.
Figure~\ref{fig:cylinder} shows a temporal snapshot at time $t = 4.0$. The
computations where performed with a mesh consisting of 9.4M quadrilaterals
corresponding to 9.4M degrees of freedom per component for $\mathbb{Q}^1$,
and with a mesh consisting of 2.4M quadrilaterals for $\mathbb{Q}^2$ and
$\mathbb{Q}^3$, corresponding to 5.3M ($\mathbb{Q}^2$) and 9.4M
($\mathbb{Q}^3$) degrees of freedom per component.
We observe qualitatively that all spatial discretizations lead to a
comparable results with well captured (unstable) contact discontinuities
emerging from primary and secondary triple points.
\begin{figure}[p]
  \centering
  \subfloat[$\mathbb{Q}^1$, 9.4M DOFs per component]{\includegraphics[height=5.5cm]{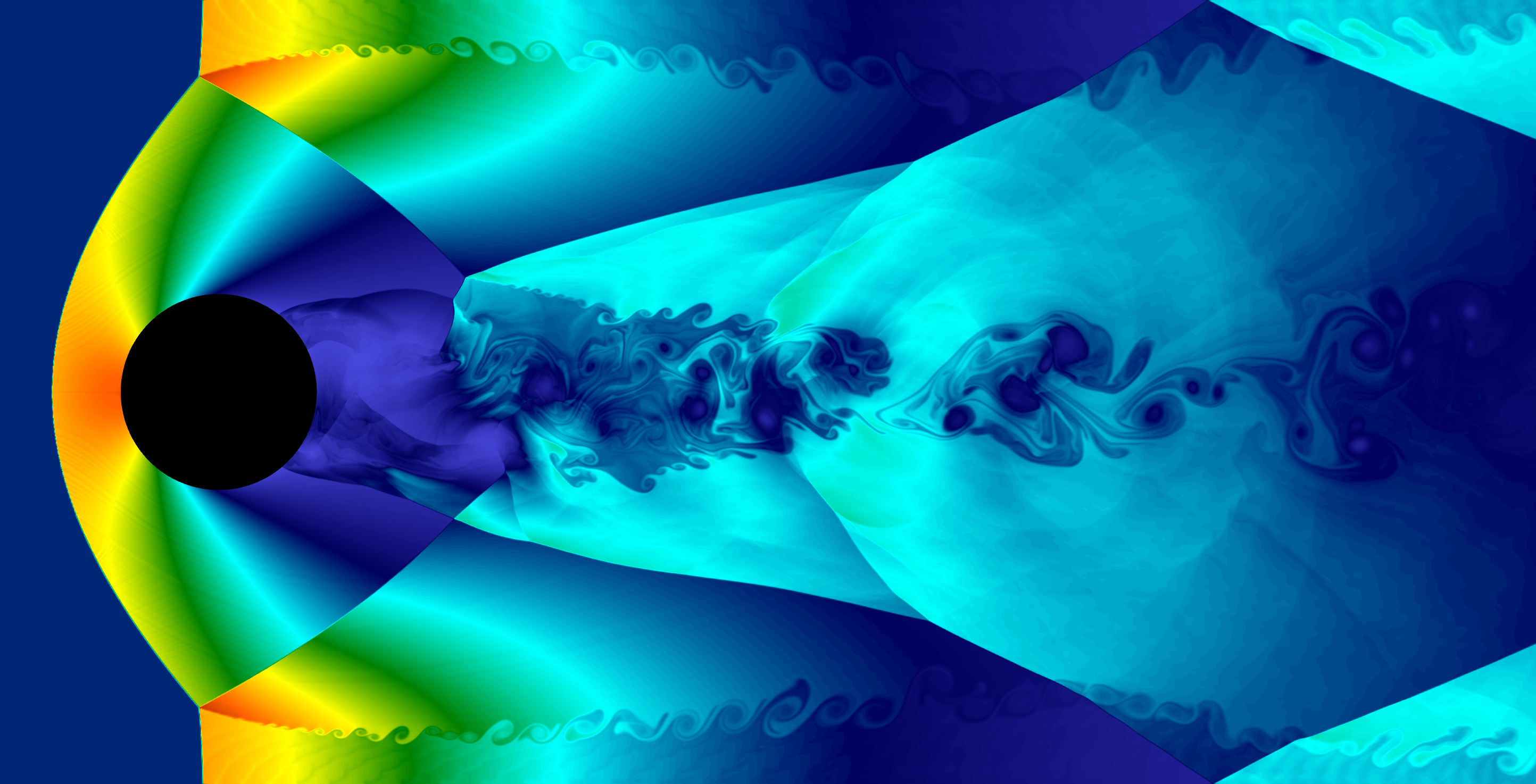}}

  \subfloat[$\mathbb{Q}^2$, 5.3M DOFs per component]{\includegraphics[height=5.5cm]{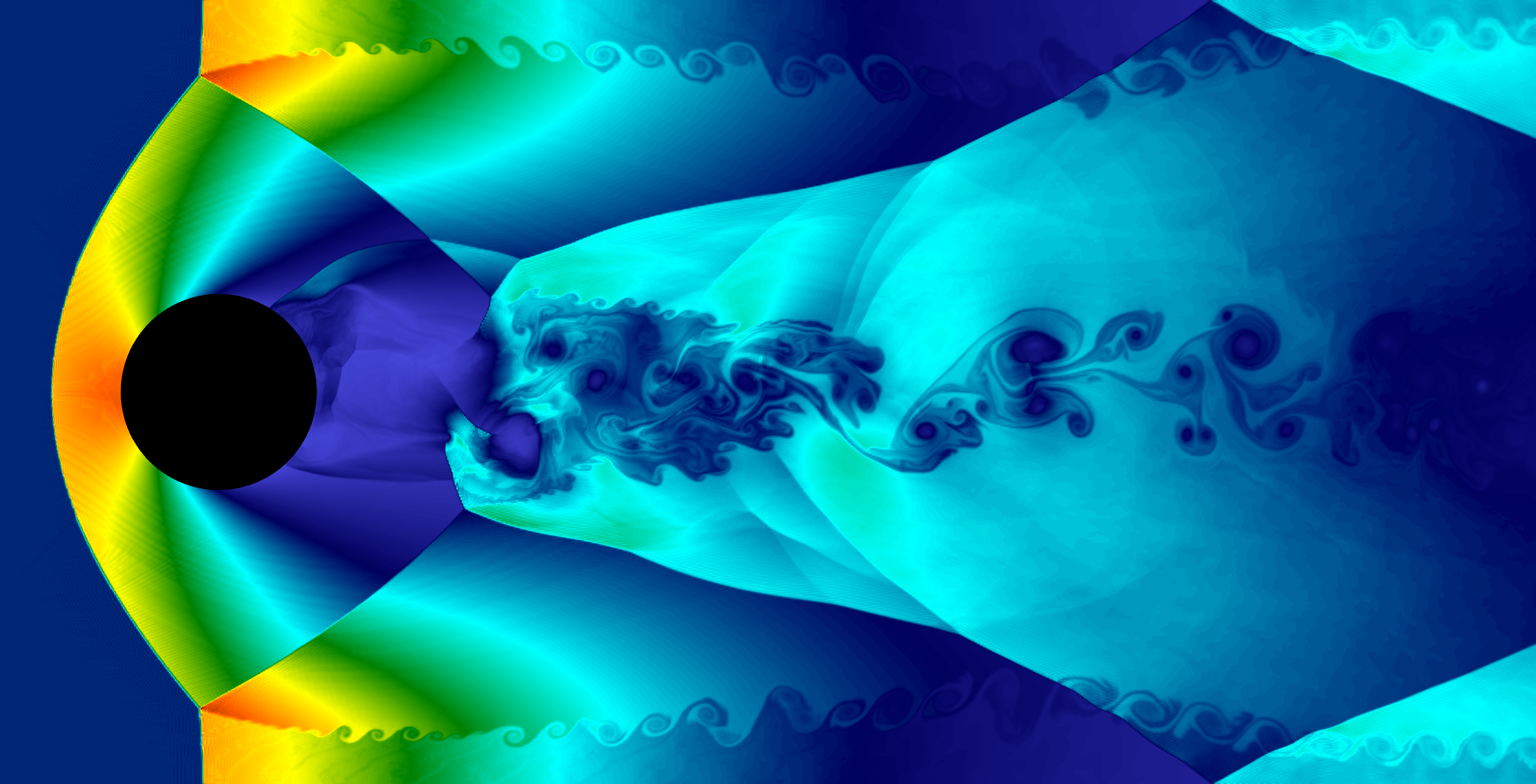}}

  \subfloat[$\mathbb{Q}^3$, 9.4M DOFs per component]{\includegraphics[height=5.5cm]{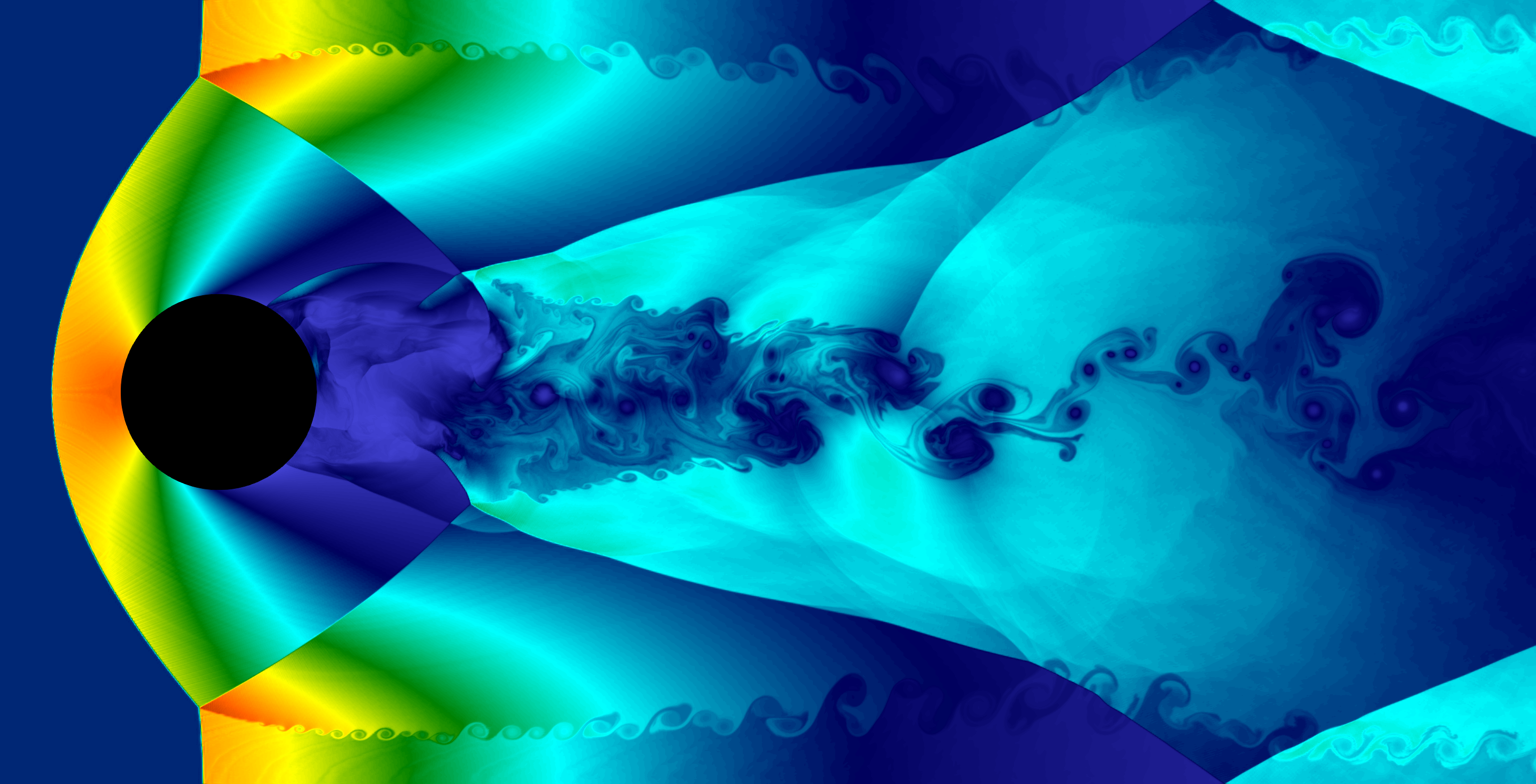}}
  \caption{Temporal snapshot at time $t=4.0$ of the density profiles of a
    supersonic Mach 3 flow past a cylinder. Computed for increasing
    polynomial degree: (a) $\mathbb{Q}^1$, (b) $\mathbb{Q}^2$, (c)
    $\mathbb{Q}^3$. The density is visualized on a rainbow colormap to
    highlight discontinuities.}
  \label{fig:cylinder}
\end{figure}


\section{Conclusion and outlook}\label{conclusion}

We have introduced a graph-based discontinuous Galerkin method for solving
hyperbolic systems of conservation laws. The method has three main
ingredients: a first-order scheme, a high-order scheme (based on the
entropy-viscosity technique), and a convex-limiting procedure that blends
the high and low order schemes. The first-order update satisfies both the
invariant-domain property as well as a pointwise discrete entropy
inequality for any entropy of the system. The resulting convex-limited
scheme preserves the invariant set using relaxed local bounds.

A notable feature of the method is the direct incorporation of boundary
conditions. The state at each node is guaranteed to be admissible provided
the boundary data supplied to the scheme is admissible. For the case of the
first-order method, this allows to prove invariant-set preservation as well as
local entropy inequalities \emph{including} the effect of boundary contributions. For the
high-order and convex-limited scheme, we have tested the implementation
of boundary conditions using the isentropic vortex test with sufficiently large
final time, allowing interaction of the vortex with the boundary. If the
boundary data is the exact analytical solution, the method delivers optimal
convergence rates. On the other hand, if the boundary data consists of the
far-field state or the data from the previous time step, the implementation is
convergent in the fully supersonic regime.

The convex-limited scheme has been evaluated with a number of numerical
tests ranging from a smooth analytic solution to a discontinuous one,
observing expected convergence rates. The discontinuous test has verified
robustness of our scheme and first-order convergence in the
shock-hydrodynamics regime. Consistent with approximation theory, for the
same number of global degrees of freedom, the lowest order $\mathbb{Q}_1$
ansatz offers the smallest $L^1$-error while having the lowest
computational complexity, and comparatively large time-step sizes. Finally,
the semi-smooth rarefaction test has verified a rather subtle aspect of
high resolution methods, which is the ability to produce better than
first-order rates for solutions that are continuous with second derivatives
of bounded variation. Finally, we verified that the proposed method is
suitable for high fidelty simulations with a 2D benchmark configuration of
a Mach 3 flow past a cylinder.


\section*{Acknowledgments}
This material is based upon work supported in part by the National Science
Foundation grant DMS-2045636 (MM), DMS-2409841 (IT), by the Air Force Office of
Scientific Research, USAF, under grant/contract number FA9550-23-1-0007 (MM),
and by the German Ministry of Education and Research through project ``PDExa:
Optimized software methods for solving partial differential equations on
exascale supercomputers'', grant agreement no. 16ME0637K (MK).


\clearpage 
\newpage
\appendix
\section{Convergence tables}
\phantom{text}


\vspace{4cm}
\begin{table}[H]
  \begin{center}
  \text{$L^1\text{-error}$}\\[0.5em]
  \begin{tabular}{lcccccc}
    \toprule
    \text{\#DOFs} & $Q^1$ & \text{rate} & $Q^2$ & \text{rate} & $ Q^3$ & \text{rate}
    \\[0.3em]
    2304    & \qty{0.0225513  }{} &      & \qty{0.00978699 }{} &      & \qty{0.00335439 }{} &      \\
    9216    & \qty{0.00728095 }{} & 1.63 & \qty{0.00114886 }{} & 3.09 & \qty{0.000256794}{} & 3.71 \\
    36864   & \qty{0.00206252 }{} & 1.82 & \qty{0.000156191}{} & 2.88 & \qty{1.70566e-05}{} & 3.91 \\
    147456  & \qty{0.000548656}{} & 1.91 & \qty{2.0117e-05 }{} & 2.96 & \qty{1.14504e-06}{} & 3.9  \\
    589824  & \qty{0.000141453}{} & 1.96 & \qty{2.53867e-06}{} & 2.99 & \qty{7.78918e-08}{} & 3.88 \\
    2359296 & \qty{3.59386e-05}{} & 1.98 & \qty{3.19261e-07}{} & 2.99 & \qty{5.49642e-09}{} & 3.82 \\
    \bottomrule
  \end{tabular} \\
  \vspace{1em}
  \text{$L^2\text{-error}$}\\[0.5em]
  \begin{tabular}{lcccccc}
    \toprule
    \text{\#DOFs} & $Q^1$ & \text{rate} & $Q^2$ & \text{rate} & $ Q^3$ & \text{rate}
    \\[0.3em]
    2304    & \qty{0.0552303  }{} &      & \qty{0.0208165  }{} &      & \qty{0.00858812 }{} &      \\
    9216    & \qty{0.0183387  }{} & 1.59 & \qty{0.00285091 }{} & 2.87 & \qty{0.000684466}{} & 3.65 \\
    36864   & \qty{0.00533783 }{} & 1.78 & \qty{0.000429764}{} & 2.73 & \qty{4.53012e-05}{} & 3.92 \\
    147456  & \qty{0.00145487 }{} & 1.88 & \qty{6.09671e-05}{} & 2.82 & \qty{3.28015e-06}{} & 3.79 \\
    589824  & \qty{0.000382151}{} & 1.93 & \qty{8.42768e-06}{} & 2.85 & \qty{2.39001e-07}{} & 3.78 \\
    2359296 & \qty{9.82513e-05}{} & 1.96 & \qty{1.17759e-06}{} & 2.84 & \qty{1.92164e-08}{} & 3.64 \\
    \bottomrule
  \end{tabular} \\
  \vspace{1em}
  \text{$L^\infty\text{-error}$}\\[0.5em]
  \begin{tabular}{lcccccc}
    \toprule
    \text{\#DOFs} & $Q^1$ & \text{rate} & $Q^2$ & \text{rate} & $ Q^3$ & \text{rate}
    \\[0.3em]
    2304    & \qty{0.346729   }{} & --   & \qty{0.144562   }{} & --   & \qty{0.102838   }{} & --   \\
    9216    & \qty{0.12977    }{} & 1.42 & \qty{0.0353153  }{} & 2.03 & \qty{0.00651384 }{} & 3.98 \\
    36864   & \qty{0.046969   }{} & 1.47 & \qty{0.00930756 }{} & 1.92 & \qty{0.000546929}{} & 3.57 \\
    147456  & \qty{0.0169462  }{} & 1.47 & \qty{0.00182551 }{} & 2.35 & \qty{6.23241e-05}{} & 3.13 \\
    589824  & \qty{0.00561304 }{} & 1.59 & \qty{0.000371504}{} & 2.30 & \qty{5.25218e-06}{} & 3.57 \\
    2359296 & \qty{0.00175026 }{} & 1.68 & \qty{7.39064e-05}{} & 2.33 & \qty{5.76251e-07}{} & 3.19 \\
    \bottomrule
  \end{tabular}
  \end{center}
 \caption{\label{ErrorIsentropicConvex}\textbf{Convex-limited scheme:
   (i) isentropic vortex test.} Error delivered by scheme described by
   \eqref{LimitedScheme}; see Section~\ref{sec:highorder}. We
   consider the cases of $\mathbb{Q}^k$ spatial discretizations for $k =
   1,2,3$.}
\end{table}

\begin{table}[p]
  \begin{center}
  \text{$L^1\text{-error}$}\\[0.5em]
  \begin{tabular}{lcccccc}
    \toprule
    \text{\#DOFs} & $Q^1$ & \text{rate} & $Q^2$ & \text{rate} & $ Q^3$ & \text{rate}
    \\[0.3em]
      120 & \qty{0.00177162 }{} & --   & \qty{0.0010682  }{} & --   & \qty{0.000300096}{} & --    \\
      240 & \qty{0.000517448}{} & 1.78 & \qty{0.000281316}{} & 1.92 & \qty{0.000102865}{} & 1.54  \\
      480 & \qty{0.000156424}{} & 1.73 & \qty{0.000112096}{} & 1.33 & \qty{3.45538e-05}{} & 1.57  \\
      960 & \qty{4.62784e-05}{} & 1.76 & \qty{3.54232e-05}{} & 1.66 & \qty{1.15774e-05}{} & 1.58  \\
     1920 & \qty{1.3013e-05 }{} & 1.83 & \qty{1.2685e-05 }{} & 1.48 & \qty{3.94556e-06}{} & 1.55  \\
     3840 & \qty{4.20382e-06}{} & 1.63 & \qty{3.8625e-06 }{} & 1.72 & \qty{1.04869e-06}{} & 1.91  \\
     7680 & \qty{1.36485e-06}{} & 1.62 & \qty{1.42314e-06}{} & 1.44 & \qty{3.57338e-07}{} & 1.55  \\
    15360 & \qty{4.51572e-07}{} & 1.6  & \qty{4.33493e-07}{} & 1.71 & \qty{1.06971e-07}{} & 1.74  \\
    \bottomrule
  \end{tabular} \\
  \end{center}
  \caption{\label{ErrorRarefactionConvex}\textbf{Convex-limited scheme:
    (ii) rarefaction test.} Convergence rates for scheme
    \eqref{LimitedScheme}. We consider the cases of $\mathbb{Q}^k$ spatial
    discretizations for $k = 1,2,3$. The number of elements for each case
    $k=1,2,3$ has been chosen in a way that the number of degrees of
    freedom of each refinement case match exactly. Note that the best
    expected rate for this test is $\mathcal{O}(h^2)$ for all polynomial
    degrees.}
\end{table}

\begin{table}[p]
  \begin{center}
  \text{$L^1\text{-error}$}\\[0.5em]
  \begin{tabular}{lcccccc}
    \toprule
    \text{\#DOFs} & $Q^1$ & \text{rate} & $Q^2$ & \text{rate} & $ Q^3$ & \text{rate}
    \\[0.3em]
      120 & \qty{0.136198  }{} & --   & \qty{0.12563   }{} & --   & \qty{0.115759  }{} & --   \\
      240 & \qty{0.0951616 }{} & 0.52 & \qty{0.0686258 }{} & 0.87 & \qty{0.0595728 }{} & 0.96 \\
      480 & \qty{0.05519   }{} & 0.79 & \qty{0.0384026 }{} & 0.84 & \qty{0.0399522 }{} & 0.58 \\
      960 & \qty{0.0319462 }{} & 0.79 & \qty{0.0290277 }{} & 0.4  & \qty{0.0205211 }{} & 0.96 \\
     1920 & \qty{0.0163883 }{} & 0.96 & \qty{0.0144072 }{} & 1.01 & \qty{0.0121395 }{} & 0.76 \\
     3840 & \qty{0.00870375}{} & 0.91 & \qty{0.00849438}{} & 0.76 & \qty{0.00655946}{} & 0.89 \\
     7680 & \qty{0.00432392}{} & 1.01 & \qty{0.00473035}{} & 0.84 & \qty{0.004345  }{} & 0.59 \\
    15360 & \qty{0.0022895 }{} & 0.92 & \qty{0.00262618}{} & 0.85 & \qty{0.00229631}{} & 0.92 \\
    \bottomrule
  \end{tabular} \\
  \end{center}
  \caption{\label{ErrorLeblancConvex}\textbf{Convex-limited scheme:
    (iii) LeBlanc test.} Error delivered by scheme described by
    \eqref{LimitedScheme}; see Section~\ref{sec:highorder}. We
    consider the cases of $\mathbb{Q}^k$ spatial discretizations for $k =
    1,2,3$.}
\end{table}

\clearpage 
\newpage
\begin{table}
  \begin{center}
    \text{Strategy (a): exact Dirichlet data} \\
    \begin{tabular}{ccccccccc}
      \toprule
      \text{\#DOFs} & $M = 1.0$ & \text{rate}
                    & $M = 1.5$ & \text{rate}
                    & $M = 2.0$ & \text{rate}
                    & $M = 2.5$ & \text{rate}
      \\[0.3em]
      2304    & 4.37E-03 &  -   & 3.05E-03 &  -   & 3.05E-03 &   -  & 2.79E-03 &  -   \\
      9216    & 8.63E-04 & 2.34 & 6.26E-04 & 2.28 & 6.13E-04 & 2.31 & 5.68E-04 & 2.29 \\
      36864   & 1.78E-04 & 2.27 & 1.34E-04 & 2.21 & 1.26E-04 & 2.28 & 1.15E-04 & 2.29 \\
      147456  & 4.03E-05 & 2.14 & 3.15E-05 & 2.09 & 2.85E-05 & 2.14 & 2.60E-05 & 2.15 \\
      589824  & 9.61E-06 & 2.06 & 7.63E-06 & 2.04 & 6.79E-06 & 2.07 & 6.18E-06 & 2.07 \\
      2359296 & 2.45E-06 & 1.97 & 1.87E-06 & 2.02 & 1.65E-06 & 2.03 & 1.50E-06 & 2.03 \\
      \bottomrule
    \end{tabular}\\
    \vspace{1em}
    \text{Strategy (b): sub/super-sonic boundary conditions with exact Dirichlet data}
    \\[0.5em]
    \begin{tabular}{ccccccccc}
      \toprule
      \text{\#DOFs} & $M = 1.0$ & \text{rate}
                    & $M = 1.5$ & \text{rate}
                    & $M = 2.0$ & \text{rate}
                    & $M = 2.5$ & \text{rate}
      \\[0.3em]
      2304    & 4.05E-03 & -    & 2.98E-03 & -     & 2.41E-03 & -    & 1.99E-03 & -    \\
      9216    & 8.36E-04 & 2.27 & 6.19E-04 & 2.270 & 5.15E-04 & 2.22 & 4.29E-04 & 2.21 \\
      36864   & 1.79E-04 & 2.22 & 1.33E-04 & 2.210 & 1.13E-04 & 2.18 & 9.88E-05 & 2.12 \\
      147456  & 4.12E-05 & 2.12 & 3.13E-05 & 2.094 & 2.67E-05 & 2.08 & 2.37E-05 & 2.05 \\
      589824  & 9.92E-06 & 2.05 & 7.60E-06 & 2.044 & 6.52E-06 & 2.03 & 5.85E-06 & 2.02 \\
      2359296 & 2.54E-06 & 1.96 & 1.87E-06 & 2.024 & 1.60E-06 & 2.01 & 1.45E-06 & 2.01 \\
      \bottomrule
    \end{tabular}\\
    \vspace{1em}
    \text{Strategy (c): sub/super-sonic boundary conditions with far-field approximation}
    \\[0.5em]
    \begin{tabular}{ccccccccc}
      \toprule
      \text{\#DOFs} & $M = 1.0$ & \text{rate}
                    & $M = 1.5$ & \text{rate}
                    & $M = 2.0$ & \text{rate}
                    & $M = 2.5$ & \text{rate}
      \\[0.3em]
      2304    & 1.36E-02 & -    & 4.09E-03 & -   & 2.41E-03 & -    & 1.99E-03 & -    \\
      9216    & 1.07E-02 & 0.35 & 1.70E-03 & 1.26 & 5.15E-04 & 2.22 & 4.29E-04 & 2.21 \\
      36864   & 9.65E-03 & 0.14 & 1.14E-03 & 0.56 & 1.14E-04 & 2.17 & 9.88E-05 & 2.12 \\
      147456  & 9.20E-03 & 0.06 & 1.01E-03 & 0.17 & 2.73E-05 & 2.05 & 2.37E-05 & 2.05 \\
      589824  & 8.98E-03 & 0.03 & 9.31E-04 & 0.12 & 6.86E-06 & 1.99 & 5.85E-06 & 2.02 \\
      2359296 & 8.86E-03 & 0.02 & 8.72E-04 & 0.09 & 1.79E-06 & 1.93 & 1.45E-06 & 2.01 \\
      \bottomrule
    \end{tabular}\\
    \vspace{1em}
    \text{Strategy (d): sub/super-sonic boundary conditions with current state}
    \\[0.5em]
    \begin{tabular}{ccccccccc}
      \toprule
      \text{\#DOFs} & $M = 1.0$ & \text{rate}
                    & $M = 1.5$ & \text{rate}
                    & $M = 2.0$ & \text{rate}
                    & $M = 2.5$ & \text{rate}
      \\[0.3em]
      2304    & 6.12E-03 &  -    & 3.02E-03 &  -   & 2.41E-03 &   -  & 2.03E-03 &  -   \\
      9216    & 3.99E-03 & 0.61  & 6.58E-04 & 2.20 & 5.08E-04 & 2.24 & 4.33E-04 & 2.22 \\
      36864   & 3.59E-03 & 0.15  & 1.76E-04 & 1.90 & 1.13E-04 & 2.16 & 9.88E-05 & 2.13 \\
      147456  & 1.05E-02 & -1.56 & 7.09E-05 & 1.31 & 2.67E-05 & 2.08 & 2.37E-05 & 2.05 \\
      589824  & 3.97E-03 & 1.41  & 4.45E-05 & 0.67 & 6.52E-06 & 2.03 & 5.85E-06 & 2.02 \\
      2359296 & 4.66E-03 & -0.22 & 3.64E-05 & 0.29 & 1.60E-06 & 2.01 & 1.45E-06 & 2.01 \\
      \bottomrule
    \end{tabular}
  \end{center}
  \caption{\label{ErrorBoundaryConditions}
    \textbf{Boundary condition validation: modified isentropic vortex
    test.} We use a modified isentropic vortex test to verify the accuracy
    of the boundary value enforcement outlined in
    Section~\ref{sec:boundary_conditions}. We test four different
    strategies for the construction of boundary data: (a) exact Dirichlet
    data, (b) sub/super-sonic boundaries with exact analytical data, (c)
    sub/super-sonic boundaries with far-field state; and (d)
    sub/super-sonic boundaries using data from the previous time-step; see
    Section \ref{subsec:BCvalidation}. The test is repeated for different
    Mach numbers, $M=1$, $1.5$, $2.0$, and $2.5$. In every case
    we use $\mathbb{Q}_1$ spatial discretization
    We note that for $M = 1$ and $M = 1.5$ a significant portion of the
    outflow boundary (top and right edges of the domain) is subsonic.
    Strategies (a) and (b) deliver proper convergence rates. Strategies (c)
    and (d) are only convergent for sufficiently large Mach numbers.}
\end{table}

\begin{table}[H]
\begin{center}
 \begin{tabular}{cccc}
  \toprule
  \text{\#DOFs} & $\mathbb{Q}_1$ & $\mathbb{Q}_2$ &
  $\mathbb{Q}_3$
        \\[0.3em]
        \cmidrule(lr){1-4}
        2304   & 1.2799e-01 & 1.5639e-01 & 1.7400e-01 \\
        9216   & 8.5427e-02 & 1.1676e-01 & 1.4080e-01 \\
        36864  & 5.1002e-02 & 7.6054e-02 & 9.8985e-02 \\
        147456 & 2.8281e-02 & 4.4705e-02 & 6.1502e-02 \\
        589824 & 1.4981e-02 & 2.4553e-02 & 3.5023e-02 \\
        \bottomrule
  \end{tabular}

  \caption{\textbf{Error of the first-order method with respect polynomial
degree.}\label{ErrorGrowthTable} This tables illustrates the growth of the
$L^1$-error of the first-order scheme
\eqref{cijdijscheme}-\eqref{eq:dij_low_order} as the polynomial degree grows
for the isentropic vortex problem. For every polynomial degree the error halves
with each mesh refinement (as expected from a first-order scheme). However, the
error constant, or ``error pre-factor'', grows with the polynomial
degree. }
\end{center}
\end{table}


\clearpage
\newpage
\section{Implementation of boundary conditions: adaptation to continuous finite
elements}\label{AppCg}

The scheme \eqref{cijdijscheme}--\eqref{eq:dij_low_order} and the
mathematical theory developed in this manuscript is entirely valid for the
case of continuous finite elements. However, slight changes are required in
definitions of \eqref{eq:cij_def1}--\eqref{eq:cij_def2}. Here provide the
required mathematical statements but avoid doing the proofs since they are
just adaptations of the the proof already presented for the discontinuous
case in Section \ref{sec:LowOrder}.

In the context of continuous finite elements we have to use the following
definitions:
\begin{align}
  \label{cijcidefCg}
  \bv{c}_{ij} := \int_{\Omega} \nabla\HypBasisScal_j \HypBasisScal_i \dx
  - \tfrac{1}{2} \int_{\partial \Omega} \HypBasisScal_{j} \HypBasisScal_i
  \normal_{\bdry} \ds, \qquad
  \bv{c}_i^{\bdry} = \tfrac{1}{2} \int_{\partial \Omega} \HypBasisScal_i
  \normal_{\bdry} \ds \, .
\end{align}

Note that the face integral $\tfrac{1}{2} \int_{\partial \Omega}
\HypBasisScal_{j} \HypBasisScal_i \normal_{\bdry} \ds$ can only be nonzero
if both $\HypBasisScal_{j}$ and $\HypBasisScal_i$ have support on the
boundary.

\begin{proposition}
  The vectors $\bv{c}_{ij}$ as defined in \eqref{cijcidefCg} satisfy the
  usual skew-symmetry property $\bv{c}_{ij} = - \bv{c}_{ji}$ for all $i,j
  \in \vertices$
\end{proposition}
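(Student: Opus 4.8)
The plan is to verify skew-symmetry directly from the definition \eqref{cijcidefCg} by splitting $\bv{c}_{ij}$ into its volume and boundary contributions and treating each separately. Writing $\bv{c}_{ij} = \bv{c}_{ij}^\Omega - \bv{c}_{ij}^{\bdry}$ with $\bv{c}_{ij}^\Omega := \int_\Omega \nabla\HypBasisScal_j\,\HypBasisScal_i\dx$ and $\bv{c}_{ij}^{\bdry} := \tfrac12\int_{\bdry}\HypBasisScal_j\HypBasisScal_i\,\normal_{\bdry}\ds$, I would observe immediately that the boundary term is symmetric in the indices, $\bv{c}_{ij}^{\bdry} = \bv{c}_{ji}^{\bdry}$, since the integrand $\HypBasisScal_j\HypBasisScal_i\normal_{\bdry}$ is unchanged under swapping $i$ and $j$. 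So the entire burden of producing the sign change falls on the volume term.

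For the volume term the key tool is integration by parts. Applying the product rule and the divergence theorem componentwise gives
\begin{align*}
  \int_\Omega \nabla\HypBasisScal_j\,\HypBasisScal_i\dx
  + \int_\Omega \HypBasisScal_j\,\nabla\HypBasisScal_i\dx
  = \int_{\bdry} \HypBasisScal_j\HypBasisScal_i\,\normal_{\bdry}\ds,
\end{align*}
which rearranges to $\bv{c}_{ij}^\Omega + \bv{c}_{ji}^\Omega = 2\,\bv{c}_{ij}^{\bdry}$. This is precisely the identity that allows the half-weighted boundary correction in \eqref{cijcidefCg} to cancel the asymmetric part of the volume integral.

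Combining the two observations finishes the argument: I would compute
\begin{align*}
  \bv{c}_{ij} + \bv{c}_{ji}
  = \big(\bv{c}_{ij}^\Omega + \bv{c}_{ji}^\Omega\big)
  - \big(\bv{c}_{ij}^{\bdry} + \bv{c}_{ji}^{\bdry}\big)
  = 2\,\bv{c}_{ij}^{\bdry} - 2\,\bv{c}_{ij}^{\bdry}
  = \bzero,
\end{align*}
using the integration-by-parts identity for the first parenthesis and the symmetry of the boundary term for the second. This yields $\bv{c}_{ij} = -\bv{c}_{ji}$ as claimed. The structure here mirrors the discontinuous case handled in Lemma~\ref{PropSkew}, but is in fact simpler: because the finite element space is continuous, the shape functions are globally defined and there is no need to track element-local normals or to distinguish the cases $K_i = K_j$ and $K_i \neq K_j$ as in the discontinuous setting.

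I do not anticipate a genuine obstacle, as the result is essentially the statement that integration by parts produces a symmetric boundary defect. The only point requiring a modicum of care is ensuring the vector-valued convention is handled consistently—$\nabla\HypBasisScal_j$ and $\normal_{\bdry}$ are both vectors in $\mathbb{R}^d$, so the identity above is read componentwise—but this is bookkeeping rather than a conceptual difficulty.
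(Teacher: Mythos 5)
Your proof is correct and follows essentially the same route as the paper: integration by parts performed over the whole domain $\Omega$ (legitimate because the continuous shape functions are globally weakly differentiable), with the resulting symmetric boundary defect cancelled by the half-weighted boundary correction in \eqref{cijcidefCg}. The paper states this only as a sketch; your write-up supplies the same argument with the details made explicit.
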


\begin{proof}
  The proof follows by integration by parts arguments. However, in this
  case, since the shape functions $\{\HypBasisScal_i\}_{i \in \vertices}$
  are compactly supported and weakly differentiable in $\Omega$, we do not
  need to use integration by parts on each element $\element$, but rather
  integration by parts in entire domain $\Omega$, see also
  \cite{DiscIndep2019}.
\end{proof}

\begin{proposition}[Partition of unity properties] The vectors $\bv{c}_{ij}$ and
$\bv{c}_{i}^{\bdry}$as defined in \eqref{cijcidefCg} satisfy the
partition of unity property $\sum_{j \in \mathcal{I}(i)} \bv{c}_{ij} +
\bv{c}_{i}^{\bdry} = \bzero$.
\end{proposition}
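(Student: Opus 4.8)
The plan is to mirror the computation carried out in Lemma~\ref{LemPart} for the discontinuous case, but now exploiting the global (rather than element-wise) structure of the continuous finite element space. I would start by inserting the definitions~\eqref{cijcidefCg} into the sum $\sum_{j \in \mathcal{I}(i)} \bv{c}_{ij} + \bv{c}_{i}^{\bdry}$. Because the continuous shape functions are globally weakly differentiable and compactly supported in $\Omega$, I can pull the sum over $j$ inside the integrals and use the global partition-of-unity property $\sum_{j} \HypBasisScal_j \equiv 1$ on all of $\Omega$. This gives a volume term $\int_\Omega \nabla\big(\sum_j \HypBasisScal_j\big)\,\HypBasisScal_i\dx = \int_\Omega \nabla(1)\,\HypBasisScal_i\dx = \bzero$, so the volume contribution vanishes immediately.

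The remaining step is to show the boundary contributions cancel. After the volume term drops, what is left is
\begin{align*}
  -\tfrac{1}{2}\int_{\partial\Omega}\Big(\sum_{j}\HypBasisScal_j\Big)\HypBasisScal_i\,\normal_{\bdry}\ds
  \;+\; \tfrac{1}{2}\int_{\partial\Omega}\HypBasisScal_i\,\normal_{\bdry}\ds.
\end{align*}
Invoking the partition of unity $\sum_j \HypBasisScal_j \equiv 1$ once more, now restricted to $\partial\Omega$, the first integral becomes $-\tfrac{1}{2}\int_{\partial\Omega}\HypBasisScal_i\,\normal_{\bdry}\ds$, which exactly cancels the second. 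Hence the total is $\bzero$, as claimed.

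The main simplification relative to the discontinuous proof is that there is no interior-face bookkeeping: in the continuous setting the single global integration-by-parts replaces the element-by-element argument, and there is no splitting into $\mathcal{I}(\element_i)$ versus $\mathcal{I}(i)\setminus\mathcal{I}(\element_i)$, nor any interior-face term $\partial\element_i\setminus\partial\Omega$. The only point that requires care is to confirm that the summation index $j$ ranges over enough indices that $\sum_j \HypBasisScal_j \equiv 1$ holds globally (and on $\partial\Omega$); this is where the global partition-of-unity property of the continuous space, in contrast to the element-local property~\eqref{partUnit}, does the essential work. Given that, the rest is a direct, routine cancellation.
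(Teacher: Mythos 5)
Your proof is correct and takes essentially the approach the paper intends: the paper itself omits the argument, stating only that it follows by arguments similar to Lemma~\ref{LemPart}, and your computation---inserting the definitions \eqref{cijcidefCg}, using the global partition of unity to annihilate the volume term, and cancelling the two boundary integrals---is precisely that adaptation to the continuous setting. Your closing point of care is also the right one: the sum may be enlarged from $\mathcal{I}(i)$ to all of $\vertices$ since terms with $\bv{c}_{ij} = \bzero$ contribute nothing, after which $\sum_{j}\HypBasisScal_j \equiv 1$ holds on all of $\overline{\Omega}$ and the cancellation is immediate.
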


The proof follows using arguments similar to those of Lemma \ref{LemPart}.

\begin{lemma}[Total balance] The scheme \eqref{cijdijscheme} with
$\bv{c}_{ij}$ and $\bv{c}_i^{\bdry}$ as defined in \eqref{cijcidefCg}
satisfies the flux-balance \eqref{totalBalance}
\end{lemma}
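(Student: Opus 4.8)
The plan is to follow verbatim the two-step argument used in the discontinuous case, namely to first recast \eqref{cijdijscheme} in the skew-symmetric flux form of Lemma~\ref{LemSkew}, and then to sum over all degrees of freedom and exploit skew symmetry to annihilate the interior contributions, exactly as in Corollary~\ref{TotBalLemma}. The crucial observation is that both ingredients required for that argument have already been established for the continuous definitions \eqref{cijcidefCg}: the skew-symmetry $\bv{c}_{ij} = -\bv{c}_{ji}$ and the partition-of-unity identity $\sum_{j \in \mathcal{I}(i)} \bv{c}_{ij} + \bv{c}_i^{\bdry} = \bzero$. Since the reformulation and the summation are purely algebraic and invoke nothing beyond these two properties, they transfer without modification to the continuous setting.

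First I would derive the skew-symmetric form. Starting from \eqref{cijdijscheme}, I would add to the inviscid flux terms the vanishing quantity $\flux(\statevector_i^n)\big(\sum_{j \in \mathcal{I}(i)}\bv{c}_{ij} + \bv{c}_i^{\bdry}\big) = \bzero$, which is legitimate precisely by the partition-of-unity proposition for \eqref{cijcidefCg}. This symmetrizes the inviscid part, replacing $\flux(\statevector_j^n)\,\bv{c}_{ij}$ by $\big(\flux(\statevector_j^n) + \flux(\statevector_i^n)\big)\bv{c}_{ij}$ and $\flux(\statevector_i^{\bdry,n})\,\bv{c}_i^{\bdry}$ by $\big(\flux(\statevector_i^{\bdry,n}) + \flux(\statevector_i^n)\big)\bv{c}_i^{\bdry}$, yielding exactly the fluxes $\bv{F}_{ij}^{\low}$ and $\bv{F}_i^{\bdry,\low}$ of Lemma~\ref{LemSkew} and hence the reformulated scheme \eqref{skewSymmetric}. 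The skew symmetry $\bv{F}_{ij}^{\low} = -\bv{F}_{ji}^{\low}$ then follows, as in Lemma~\ref{LemSkew}, from $\bv{c}_{ij} = -\bv{c}_{ji}$ together with the symmetry $d_{ij}^{\low,n} = d_{ji}^{\low,n}$ of the graph viscosity \eqref{eq:dij_low_order}, the latter depending on $(i,j)$ only through $|\bv{c}_{ij}|_{\ell^2}$ and the symmetric maximal-wavespeed estimate.

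Next I would sum \eqref{skewSymmetric} over all $i \in \vertices$. The double sum $\sum_{i}\sum_{j\in\mathcal{I}(i)}\bv{F}_{ij}^{\low}$ runs over unordered pairs of interacting degrees of freedom, the stencil being reciprocal since $j\in\mathcal{I}(i) \iff i\in\mathcal{I}(j)$ by skew symmetry of $\bv{c}_{ij}$. Hence every pair $\{i,j\}$ contributes $\bv{F}_{ij}^{\low} + \bv{F}_{ji}^{\low} = \bzero$ and the entire interior flux sum vanishes. What remains is $\sum_{i\in\vertices} m_i\big(\statevector_i^{n+1} - \statevector_i^n\big) + \sum_{i\in\vertices}\bv{F}_i^{\bdry,\low} = \bzero$; since $\bv{c}_i^{\bdry} = \bzero$ and $\loworderdi = 0$ for every $i \notin \mathcal{I}(\bdry)$, the boundary sum collapses to $\sum_{i\in\mathcal{I}(\bdry)}\bv{F}_i^{\bdry,\low}$, and dividing by $\dt_n$ produces precisely \eqref{totalBalance}.

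I expect no substantive obstacle: the argument is algebraic and its only genuinely setting-dependent inputs are the skew-symmetry and partition-of-unity propositions already proved for \eqref{cijcidefCg}. The single point worth a moment's care is confirming stencil reciprocity---that a nonzero $\bv{F}_{ij}^{\low}$ forces its partner $\bv{F}_{ji}^{\low}$ to also appear in the sum so that the cancellation is exact---but this is immediate from $\bv{c}_{ij} = -\bv{c}_{ji}$. This is exactly why the proof is merely an adaptation of the discontinuous case and can be safely omitted, as the authors indicate.
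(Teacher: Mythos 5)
Your proof is correct and takes exactly the route the paper intends: the paper omits this proof precisely because it is the same two-step argument as in the discontinuous case, namely the skew-symmetric reformulation of Lemma~\ref{LemSkew} (enabled by the partition-of-unity and skew-symmetry propositions for \eqref{cijcidefCg}) followed by summation over $i\in\vertices$ with pairwise cancellation of the interior fluxes as in Corollary~\ref{TotBalLemma}. Your reconstruction, including the observation that stencil reciprocity follows from $\bv{c}_{ij}=-\bv{c}_{ji}$, matches that intended argument.
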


The proof of this lemma is omitted since it is identical to the proof of Lemma
\ref{TotBalLemma}.


\bibliographystyle{siamplain}

\end{document}